\newtheorem{theorem}{Theorem}
\newtheorem{conjecture}{Conjecture}
\newcommand{\bx}{\mathbf{x}}
\newcommand{\by}{\mathbf{y}}
\newcommand{\bz}{\mathbf{z}}
\newcommand{\bfor}{\mathbf{f}}
\newcommand{\bth}{\boldsymbol{\theta}}
\title{Improving the Convergence of the Iterative Ensemble Kalman Filter by Resampling}
\author[1]{Jiacheng Wu}
\author[1,2,3]{Jian-Xun Wang}
\author[1]{Shawn C. Shadden}
\affil[1]{Department of Mechanical Engineering, University of California, Berkeley}
\affil[2]{Department of Aerospace and Mechanical Engineering, University of Notre Dame}
\affil[3]{Center of Informatics and Computational Science, University of Notre Dame}
\date{}
\begin{document}
\maketitle

\section*{Abstract}
The iterative ensemble Kalman filter (IEnKF) is widely used in inverse problems to estimate system parameters from limited observations. 
However, the IEnKF, when applied to nonlinear systems, can be plagued by poor convergence. 
Here we provide a comprehensive convergence analysis of the IEnKF and propose a new method to improve its convergence. 
A theoretical analysis of the standard IEnKF is presented and we demonstrate that the interaction between the nonlinearity of the forward model and the diminishing effect of the Kalman updates results in ``early stopping" of the IEnKF, i.e. the Kalman operator converges to zero before the innovation is minimized. 
The steady state behavior of the early stopping phenomenon and its relation to observation uncertainty is demonstrate. 
We then propose an approach to prevent the early stopping by perturbing the covariance with hidden parameter ensemble resampling. 
The ensemble mean and covariance are kept unchanged during the resampling process, which ensures the Kalman operator at each iteration maintains a correct update direction. 
We briefly discuss the influence higher moments, such as kurtosis, of the resampling distribution on algorithm convergence. 
Parallel to the above developments, an example problem is presented to demonstrate the early stopping effect, and the application and merit of the proposed resampling scheme.

\section{Introduction}


The Kalman filter provides optimal estimation for linear dynamics with Gaussian noise~\cite{kalman1960new} and has been widely used in engineering applications.  
There have been several variants of this classic method aimed to improve its generality and efficiency. 
The extended Kalman filter (EKF)~\cite{law2015data} and unscented Kalman filter (UKF)~\cite{julier2004unscented} were introduced to better address state estimation for nonlinear systems, and the ensemble Kalman filter (EnKF)~\cite{evensen1994sequential, bergemann2010localization, bergemann2010mollified} was proposed to reduce computational cost by utilizing ensemble-based covariances. 
The iterative ensemble Kalman filter (IEnKF)~\cite{iglesias2013ensemble} was developed to handle nonlinear inverse problems and leverage the computational efficiency of ensemble-based sampling. 
The IEnKF has since been applied to highly nonlinear inverse problems in areas such as turbulence~\cite{kato2015data, wang2016data}, geophysics~\cite{shi2015parameter} and biomedical engineering~\cite{wang2018data}. 
Performance analyses of the IEnKF can be found in \cite{schillings2017convergence, schillings2017analysis, majda2018performance}.

Despite its broad applicability, the IEnKF can suffer from poor convergence and stability. A major reason the IEnKF can fail to provide accurate estimation is due to a progressively diminished estimation (shrinking) of the covariance. This was initially addressed by including covariance ``inflation'' in the updates \cite{anderson1999monte}; however, tuning the inflation parameter can be inefficient. More recently, several adaptive covariance inflation methods have been proposed, which tune the inflation based on the innovation \cite{anderson2007adaptive, anderson2009spatially, berry2013adaptive, zhen2015adaptive, ying2015adaptive}. Similarly, \cite{tong2015nonlinear} proposed an adaptive covariance inflation method where the inflation parameter depends on both the innovation and the covariance between the observed and unobserved components. 

The objective of the work herein is to provide a mathematical explanation of the covariance shrinking effect observed with the IEnKF, and based on that, propose a new method to improve its convergence without covariance inflation. To achieve this goal, in \S\ref{sec:need_for_iteration} we first motivate the need for iterations of the EnKF in nonlinear inverse problems from a constrained optimization viewpoint. Then in \S\ref{sec:early-stopping}, we explain how the interplay between covariance shrinkage and the effect of the nonlinearity of the forward model can lead to ``early stopping'' whereby the method fails to converge before the innovation is minimized. In section \ref{sec:resampling-method}, we show that this early stopping can be prevented and convergence can be improved by adding an ensemble resampling step where first and second moments of the hidden parameter ensemble are kept unchanged. In \S\ref{sec:kurtosis}, different resampling distributions are compared to demonstrate the influence of higher order moments (particularly kurtosis) on the convergence of IEnKF. 

\section{The IEnKF Method} \label{sec:need_for_iteration} 

Consider a system described by a known {\em forward model}
\begin{equation}
\bx = \bfor(\boldsymbol{\bth}) \label{eq:F} 
\end{equation}
where $\bx \in \mathbb{R}^{d_{\bx}}$ is the {\em system state} and $\bth \in \mathbb{R}^{d_{\bth}}$ are the {\em model parameters}. 
Assume we have knowledge of the system state through {\em observations}
\begin{equation}
\by = H\bx + \boldsymbol{\epsilon} \label{eq:H} 
\end{equation}
where $\by \in \mathbb{R}^{d_y}$ and $\boldsymbol{\epsilon} \in \mathbb{R}^{d_y}$ represents measurement error. 
Without loss of generality, we assume the {\em observation operator} $H$ is linear. 
If $H$ is nonlinear, the nonlinearity can be absorbed into the nonlinearity of the forward model by redefining the state variable.

The {\em inverse problem} seeks to estimate $\bth$ from observations $\by$. 
This estimation can be performed using the IEnKF, which employs a {\em two-stage} iterative estimation process. 
The first stage entails an ensemble-based Kalman {\em update}, and the second stage entails a subsequent {\em prediction} to ensure an overall update that is consistent with the forward model. 
This two-stage process is iteratively repeated until convergence, as reviewed below. 
 
\subsection{Update stage}

Assume we have a set of {\em prior ensembles} for $\bth$ and $\bx$: 
$$
\left\{ \hat{\bth}_{t}^{(j)} \right\}_{j=1}^J \quad \textup{and} \quad \left\{ \hat{\bx}_{t}^{(j)} \right\}_{j=1}^J \;.
$$
Index $(j)$ denotes an ensemble member, $J$ is the number of ensembles, and index $t$ denotes the {\em iteration} number in what will be an iterative update process, and the hat $\hat{(\cdot)}$ denotes a {\em prior} estimate.
The difference between the observations $\by_t^{(j)}$ and reconstructed output $H\hat{\bx}_t^{(j)}$ is often termed the {\em innovation}, which is used to define an ensemble-based Kalman {\em update} to produce {\em posterior ensembles} for $\bth$ and $\bx$: 
\begin{subequations}
\begin{align}
\bth_t^{(j)} & =\hat{\bth}_{t}^{(j)} + C_{\hat{\bth}_t \hat{\bx}_t} H^T \left( HC_{\hat{\bx}_t \hat{\bx}_t} H^T + \Gamma \right)^{-1}\left( \by_t^{(j)} - H\hat{\bx}_t^{(j)} \right) \;, \label{eq:update-theta}\\
\bx_t^{(j)} & =\hat{\bx}_{t}^{(j)} + C_{\hat{\bx}_t \hat{\bx}_t} H^T \left( HC_{\hat{\bx}_t \hat{\bx}_t} H^T + \Gamma \right)^{-1}\left( \by_t^{(j)} - H\hat{\bx}_t^{(j)} \right) \;, \label{eq:update-x}
\end{align}
\label{eq:KF}
\end{subequations}
where $C_{\hat{\bth}_t \hat{\bx}_t}$, $C_{\hat{\bx}_t \hat{\bx}_t}$ are the discrete covariance matrices derived from the prior ensembles:
\begin{align}
C_{\hat{\bth}_t \hat{\bx}_t}  &= \frac{1}{J}\sum_{j=1}^J \left( \hat{\bth}_{t}^{(j)} - \bar{\hat{\bth}}_{t} \right) \left(  \hat{\bx}_{t}^{(j)} - \bar{\hat{\bx}}_{t}  \right)^T \;, \nonumber \\
C_{\hat{\bx}_t \hat{\bx}_t}  &= \frac{1}{J}\sum_{j=1}^J \left( \hat{\bx}_{t}^{(j)} - \bar{\hat{\bx}}_{t} \right) \left(  \hat{\bx}_{t}^{(j)} - \bar{\hat{\bx}}_{t}  \right)^T\;.
\end{align}
The {\em bar} notation $\bar{(\cdot)}$ denotes {\em ensemble mean}. 
For each ensemble member $j$, the observation $\by_t^{(j)}$ is drawn from a normal distribution $\mathcal{N}(\bar{\by}, \Gamma)$, where $\Gamma$ denotes the covariance of the observation error $\boldsymbol{\epsilon}$ (see \cite{iglesias2013ensemble} for details).  

\subsection{Prediction stage}

The Kalman update \eqref{eq:update-x} may generate posterior estimates $\bx_t^{(j)}$ that do not satisfy the forward model, Eq.~\eqref{eq:F}. Therefore, instead of using Eq.~\eqref{eq:update-x}, a {\em prediction} is used to apply the forward model to the current posterior parameter estimates to generate prior states for the next iteration step. Namely, the prior ensembles for the next step $\left\{ \hat{\bth}_{t+1}^{(j)} \right\}_{j=1}^J$ and $\left\{ \hat{\bx}_{t+1}^{(j)} \right\}_{j=1}^J$ are obtained by setting
\begin{subequations}
\begin{align}
\hat{\bth}_{t+1}^{(j)} &= \bth_{t}^{(j)}\;, \\
\hat{\bx}_{t+1}^{(j)} &= \bfor(\bth_{t}^{(j)})\;.
\end{align}
\label{eq:forward}
\end{subequations}

\subsection{Iterative process}
The results of Eq.~\eqref{eq:forward} are plugged back into Eq.~\eqref{eq:KF}, with $t \leftarrow t+1$, and the process is repeated until some stopping criterion is satisfied, e.g., the innovation becomes less than some user-defined error tolerance ($tol$): 
\begin{equation}
\left\|\bar{\by} - H \bx_t \right\|^2 < tol ~~  \textup{with} ~~ \bx_t \doteq \bfor(\bar{\bth}_t) \;,
\end{equation}
where $\bar{\by}$ is the mean of the observation ensemble $\left\{ \by_t^{(j)} \right\}_{j=1}^J$, which is independent of $t$. 
Here, the overall estimation of the unknown parameters at step $t$ is computed as the posterior ensemble mean 
\begin{equation}
\bar{\bth}_t = \frac{1}{J} \sum_{j=1}^J \bth_t^{(j)}\;,
\end{equation}
Note that when the forward model is linear (i.e., $\bfor(\bth) = F\bth$), the prediction step (\ref{eq:forward}) is not necessary, because 
\begin{align}
\bx_t^{(j)} & =\hat{\bx}_{t}^{(j)} + C_{\hat{\bx}_t \hat{\bx}_t} H^T \left( HC_{\hat{\bx}_t \hat{\bx}_t} H^T + \Gamma \right)^{-1}\left( \by_t^{(j)} - H\hat{\bx}_t^{(j)} \right) \nonumber \\
& = F\hat{\bth}_{t}^{(j)} + FC_{\hat{\bth}_t \hat{\bx}_t} H^T \left( HC_{\hat{\bx}_t \hat{\bx}_t} H^T + \Gamma \right)^{-1}\left( \by_t^{(j)} - H\hat{\bx}_t^{(j)} \right) \nonumber \\
& = F\bth_{t}^{(j)} \;.
\end{align}
However, when $\bfor(\cdot)$ is nonlinear, the posterior estimates from Eq.~\eqref{eq:KF} will not satisfy Eq.~\eqref{eq:F} in general, which is why an iterative estimation process is needed. It is the convergence of this iterative process for nonlinear problems that is the focus of this paper.  

\subsection{Example} \label{sec:example} 

Here we introduce an example that demonstrates the ``early stopping'' of the IEnKF for nonlinear systems. 
Consider the following forward model:  
\begin{equation}
\begin{bmatrix}
    x_1   \\
    x_2
\end{bmatrix}
=
\bfor(\bth)
=
\begin{bmatrix}
     \exp(-(\theta_1 + 1)^2 - (\theta_2 + 1)^2)   \\
     \exp(-(\theta_1 - 1)^2 - (\theta_2 - 1)^2)
\end{bmatrix}\;,
\label{eq:F_example}
\end{equation}
with observation operator
\begin{equation} \label{eq:H_example}
H = [-1.5, -1.0] \;.
\end{equation}
Assume the observation data has mean $\bar{y} = -1$ and uncertainty $\Gamma = 0.01$. 

Figure \ref{fig:fig_theta_convergence_no_resampling} and \ref{fig:fig_x_convergence_no_resampling} show, respectively, the solution path for $\bth$ in parameter space, and the solution path for $\bx$ in state space, during progressive iterations of the IEnKF. 
It can be observed in Fig.~\ref{fig:fig_theta_convergence_no_resampling} that the parameter estimate does not converge to a solution where $\left\|\bar{\by} - H\bfor(\bth_t) \right\|^2 = 0$. 
In Figure \ref{fig:fig_x_convergence_no_resampling} the prior mean and posterior mean are sequentially plotted at each iteration. It can be observed that the solution oscillates between these two means. 
In particular, the update stage makes the solution approach $\left\|\bar{\by} - H\bx_t \right\|^2 = 0$ shown in red, and the prediction stage makes the solution approach the dotted region where the forward model Eq.~\eqref{eq:F} is satisfied. 
{\em This example demonstrates that although innovation is not minimized, further iterations of the IEnKF will not improve the estimation of $\bth$, nor enable the state estimate converge to a value that simultaneously satisfies the forward model and minimizes misfit with the observations. } 
These two phenomena will be considered more rigorously in the next section.

\begin{figure}
\begin{subfigure}[t]{0.45\textwidth}
\includegraphics[width=\textwidth]{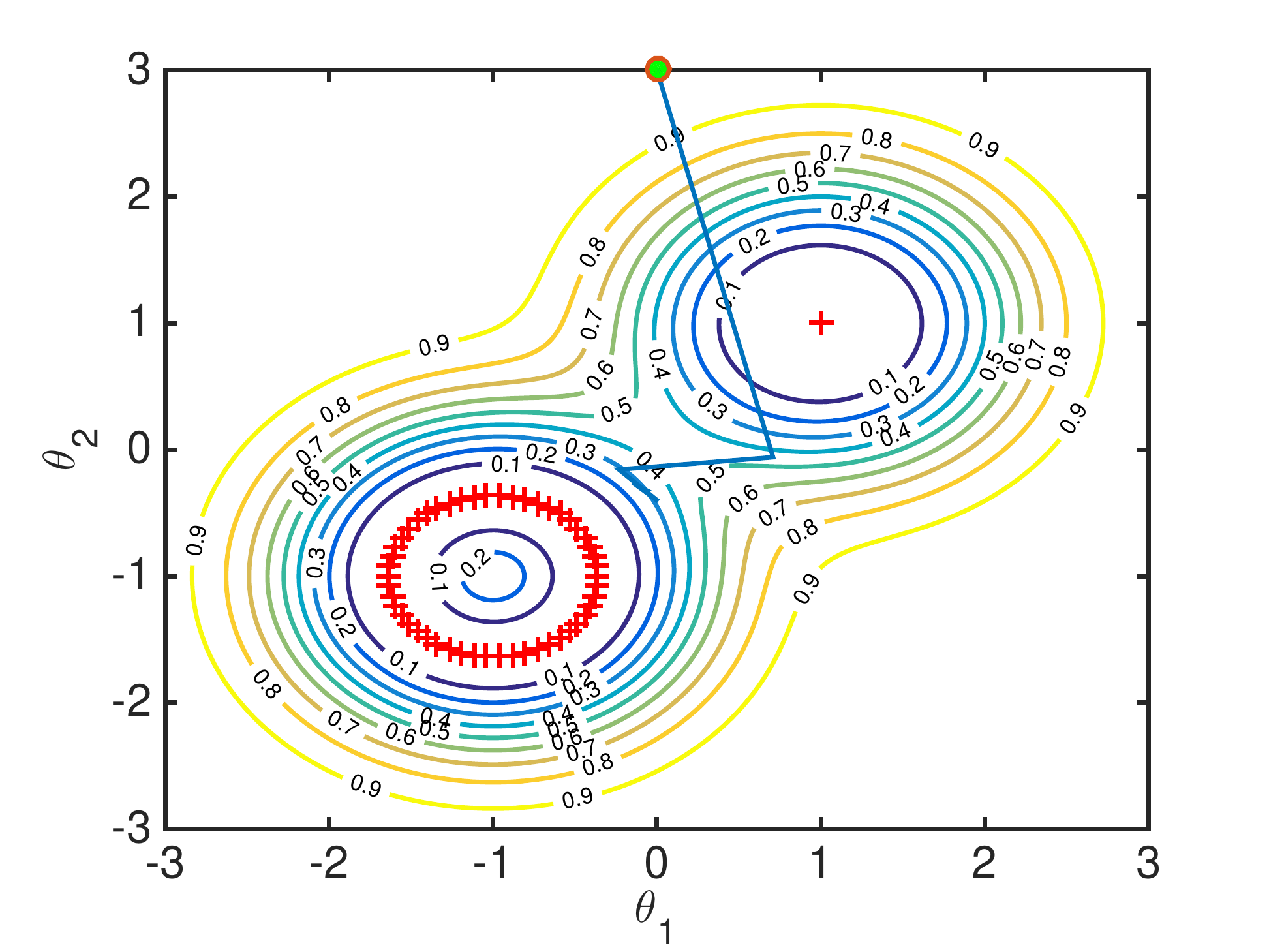}
\caption{The solution path of the ensemble mean $\bar{\bth}_t$ at each iteration $t$ plotted along with the level sets of $\left\|\bar{\by} - H\bfor(\bth) \right\|^2$. The green circle denotes the starting value. The red ``+" highlight optimal locations where $\left\|\bar{\by} - H\bfor(\bth) \right\|^2 = 0$. }
\label{fig:fig_theta_convergence_no_resampling}
\end{subfigure}
\hfill
\begin{subfigure}[t]{0.45\textwidth}
\includegraphics[width=\textwidth]{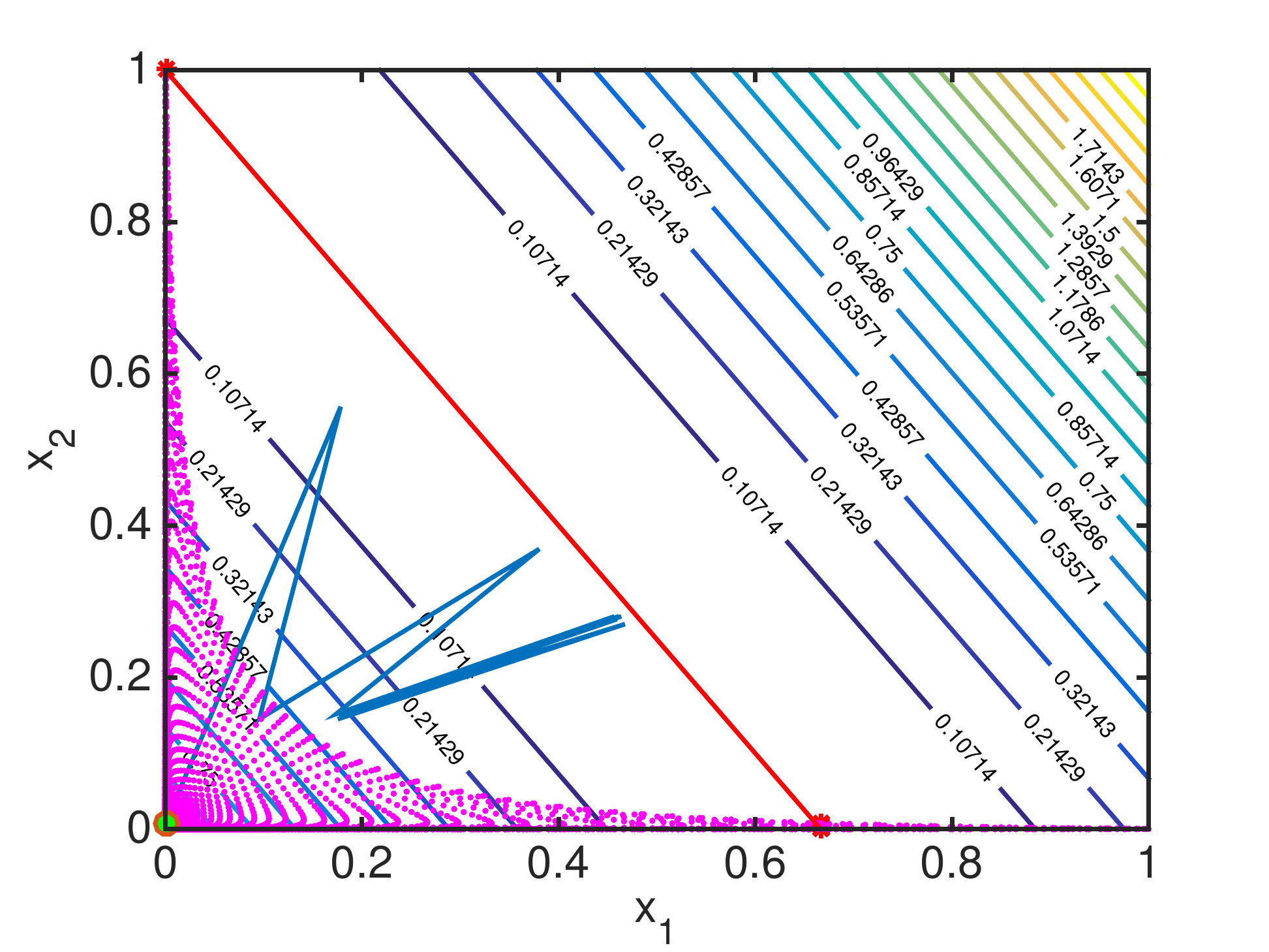}
\subcaption{The solution path of the prior mean $\bar{\hat{\bx}}_t$ and posterior mean $\bar{\bx}_t$ at each iteration $t$, along with level sets of $\left\|\bar{\by} - H\bx \right\|^2$. The green circle denotes the starting value. The red line highlights the optimal locations where $\bar{\by} - H\bx = 0$. The dotted region represents the image of the forward mapping $\bfor(\cdot)$. }
\label{fig:fig_x_convergence_no_resampling}
\end{subfigure}
\caption{Convergence of the IEnKF applied to the model problem demonstrating (a) parameter and (b) state values do not converge to the desired solution (red locations).} \label{fig:convergence_no_resampling}
\label{fig:convergence_no_resampling}
\end{figure}

\section{Early Stopping of the IEnKF} \label{sec:early-stopping}
In this section, we describe why the standard IEnKF leads to poor convergence when applied to nonlinear inverse problems. 
We first demonstrate how sequential Kalman updating alone affects the covariance matrices and resulting Kalman operator. 
We then demonstrate how the full IEnKF procedure, involving both Kalman update and prediction, affects the covariance matrices and Kalman operator. 

\subsection{Shrinking covariance}\label{sec:standard_iteration}
If we ignore the prediction stage Eq.~\eqref{eq:forward} and only consider the Kalman update Eq.~\eqref{eq:KF}, it can be shown that the covariance matrix $C_{H\hat{\bx} ,\hat{\bth}}$ resulting from two sequential Kalman updates is
\begin{equation}
C_{H\hat{\bx}_{t+1}, \hat{\bth}_{t+1}} = \left(  I - HC_{\hat{\bx}_t, \hat{\bx}_t} H ^T \left( HC_{\hat{\bx}_t,\hat{\bx}_t} H^T+ \Gamma \right)^{-1} \right) C_{H\hat{\bx}_t, \hat{\bth}_t}\;,
\end{equation}
where $I$ denotes the identity matrix. 
The above equation can be written as
\begin{equation}
C_{H\hat{\bx}_{t+1}, \hat{\bth}_{t+1}} =P_t C_{H\hat{\bx}_t, \hat{\bth}_t}\;,
\end{equation}
where
\begin{align}
P_t &\doteq  I - HC_{\hat{\bx}_t, \hat{\bx}_t} H ^T \left( HC_{\hat{\bx}_t,\hat{\bx}_t} H^T+ \Gamma \right)^{-1} \nonumber \\
&= \Gamma \left( HC_{\hat{\bx}_t,\hat{\bx}_t} H^T+ \Gamma \right)^{-1}. 
\end{align}
Note that the observation uncertainty usually takes the form $\Gamma = \alpha I$ with $\alpha > 0$. Thus, an upper bound for the Frobenius norm $\|\cdot\|_F$ of $C_{H\hat{\bx}_{t+1}, \hat{\bth}_{t+1}}$ can be derived 
\begin{align}
\left\|  C_{H\hat{\bx}_{t+1}, \hat{\bth}_{t+1}} \right\|_F & = \left\| P_t  C_{H\hat{\bx}_{t}, \hat{\bth}_{t}} \right\|_F  \nonumber \\
& \leq \frac{\alpha}{\lambda_{min} \left( HC_{\hat{\bx}_t,\hat{\bx}_t} H^T \right)+ \alpha} \left\| C_{H\hat{\bx}_{t}, \hat{\bth}_{t}} \right\|_F \nonumber \\
& \leq \prod_{\tau=1}^t \frac{\alpha}{\lambda_{min} \left( HC_{\hat{\bx}_\tau,\hat{\bx}_\tau} H^T \right)+ \alpha} \left\| C_{H\hat{\bx}_{1}, \hat{\bth}_{1}} \right\|_F\;.
\end{align}
If there exists an uniform lower bound $\delta > 0$ such that 
\begin{equation}
\lambda_{min} \left( HC_{\hat{\bx}_\tau,\hat{\bx}_\tau} H^T \right) > \delta,~~ \forall \tau = 1,2,...\;,
\label{ineq:conv_condition}
\end{equation}
then 
\begin{align}
\left\| C_{H\hat{\bx}_{t+1}, \hat{\bth}_{t+1}} \right\|_F 
\leq \left( \frac{\alpha}{\delta+ \alpha}\right)^t \left\|C_{H\hat{\bx}_{1}, \hat{\bth}_{1}} \right\|_F\;.
\end{align}
Therefore, $C_{H\hat{\bx}_{t}, \hat{\bth}_{t}}$ or $C_{\hat{\bth}_{t},H\hat{\bx}_{t}}$ will approach to zero as the iteration number increases, i.e.,
\begin{equation}
 \lim_{t \rightarrow +\infty} \left\| C_{H\hat{\bx}_{t}, \hat{\bth}_{t}} \right\|_F = 0\;.
\end{equation}
Similarly, it can be shown that $C_{\hat{\bx}_{t}, H\hat{\bx}_{t}}$  will also converge to a zero matrix as $t$ increases, unless Eq.~\eqref{ineq:conv_condition} is violated. 
By checking the Kalman update Eq.~\eqref{eq:KF}, it can be noted that the convergence of $C_{\hat{\bth}_{t}, H\hat{\bx}_{t}}$ and $C_{\hat{\bx}_{t}, H\hat{\bx}_{t}} $ to zero matrices will cause the Kalman gains 
\begin{align} 
K_t & = C_{\hat{\bth}_t \hat{\bx}_t} H^T \left( HC_{\hat{\bx}_t \hat{\bx}_t} H^T + \Gamma \right)^{-1}\;, \label{eq:kalman-gain-theta}\\
K'_t & =  C_{\hat{\bx}_t \hat{\bx}_t} H^T \left( HC_{\hat{\bx}_t \hat{\bx}_t} H^T + \Gamma \right)^{-1}\;, \label{eq:kalman-gain-x}
\end{align}
to approach to zero as $t$ increases. This implies that further iterations will not update the parameter estimation $\bth_t$, regardless of the innovation $\|\bar{\by}-Hx_t\|$. 

\subsection{Effect of forward model nonlinearity} \label{sec:nonlinearity}
We now consider the prediction step Eq.~\eqref{eq:forward} and show how the nonlinearity of the forward model affects the convergence of the IEnKF. 
First, consider the prior ensembles for the current step 
$$
\left\{ \hat{\bth}_{t}^{(j)} \right\}_{j=1}^J \quad \textup{and} \quad
\left\{ \hat{\bx}_{t}^{(j)} \right\}_{j=1}^J. 
$$ 
The mean of the prior ensembles can be computed as 
\begin{align}
\bar{\hat{\bth}}_t &= \frac{1}{J}\sum_{j=1}^J \hat{\bth}_t^{(j)}\;, \label{eq:mean_theta_t}\\
\bar{\hat{\bx}}_t & = \frac{1}{J}\sum_{j=1}^J \bfor\left( \hat{\bth}_t^{(j)} \right) \nonumber \\
& = \frac{1}{J}\sum_{j=1}^J \bfor\left( \bar{\hat{\bth}}_t \right) + \nabla \bfor\left( \bar{\hat{\bth}}_t \right) \left( \hat{\bth}_t^{(j)} - \bar{\hat{\bth}}_t \right) 
+ \mathcal{O}\left( \left\| \hat{\bth}_t^{(j)} - \bar{\hat{\bth}}_t  \right\|^2 \right) \nonumber \\
& = \bfor\left( \bar{\hat{\bth}}_t \right)  + \nabla \bfor\left( \bar{\hat{\bth}}_t \right) \frac{1}{J}\sum_{j=1}^J   \left( \hat{\bth}_t^{(j)} - \bar{\hat{\bth}}_t \right) 
+ \mathcal{O}\left( \left\| \hat{\bth}_t^{(j)} - \bar{\hat{\bth}}_t  \right\|^2 \right) \nonumber \\
& = \bfor\left( \bar{\hat{\bth}}_t \right) + \mathcal{O}\left( \left\| \hat{\bth}_t^{(j)} - \bar{\hat{\bth}}_t  \right\|^2 \right) \;, \label{eq:mean_x_t} 
\end{align}
by Taylor expansion. 
Recall that update for $\bth$ is given by
\begin{equation}
\hat{\bth}_{t+1}^{(j)} = \bth_t^{(j)} =\hat{\bth}_{t}^{(j)} + K_t\left( \by_t^{(j)} - H\hat{\bx}_t^{(j)} \right) \;. \label{eq:theta_update}
\end{equation}
The mean of the prior ensemble for $\bth$ at step $t+1$ can be computed as
\begin{align}
\bar{\hat{\bth}}_{t+1} & = \frac{1}{J}\sum_{j=1}^J \hat{\bth}_{t+1}^{(j)}  
 = \frac{1}{J}\sum_{j=1}^J {\bth}_{t}^{(j)} 
 = \bar{\hat{\bth}}_{t} + K_t \left( \bar{\by} - H\bar{\hat{\bx}}_t \right) \;,
\label{eq:mean_theta_t_1}
\end{align}
Combining (\ref{eq:mean_theta_t}), (\ref{eq:mean_x_t}), (\ref{eq:theta_update}), (\ref{eq:mean_theta_t_1}), we can obtain the deviation of each ensemble member to its ensemble mean at the next step $t+1$ as
\begin{align}
\hat{\bth}_{t+1}^{(j)}- \bar{\hat{\bth}}_{t+1}  &= \hat{\bth}_{t}^{(j)} - \bar{\hat{\bth}}_{t}   
 - K_t \left( H\hat{\bx}_t^{(j) }- H\bar{\hat{\bx}}_t  \right) + K_t \left( \by_t^{(j)} - \bar{\by}  \right) \;, \label{eq:theta_deviation}  \\
\hat{\bx}_{t+1}^{(j)} - \bar{\hat{\bx}}_{t+1} & = \bfor\left( \hat{\bth}_{t+1}^{(j)} \right) - \bfor\left( \bar{\hat{\bth}}_{t+1} \right) + \mathcal{O}\left( \left\| \hat{\bth}_{t+1}^{(j)} - \bar{\hat{\bth}}_{t+1}  \right\|^2 \right)  \nonumber \\
& = \nabla \bfor\left(\bar{\hat{\bth}}_{t+1} \right) \left( \hat{\bth}_{t+1}^{(j)} - \bar{\hat{\bth}}_{t+1}  \right) + \mathcal{O}\left( \left\| \hat{\bth}_{t+1}^{(j)} - \bar{\hat{\bth}}_{t+1}  \right\|^2 \right)   \nonumber \\
& = \nabla \bfor\left(\bar{\hat{\bth}}_{t+1} \right)   \left( \hat{\bth}_{t}^{(j)} - \bar{\hat{\bth}}_{t}  
 - K_t\left( H\hat{\bx}_t^{(j) }- H\bar{\hat{\bx}}_t  \right)  + K_t \left( \by_t^{(j)} - \bar{\by}  \right) \right) \nonumber \\
&~~~~ +\mathcal{O}\left( \left\| \hat{\bth}_{t+1}^{(j)} - \bar{\hat{\bth}}_{t+1}  \right\|^2 \right)  \;. \label{eq:x_deviation}
\end{align}
Multiplying Eq.~\eqref{eq:x_deviation} with $H$ yields the deviation of the reconstructed output with its ensemble mean:
\begin{align}
H\hat{\bx}_{t+1}^{(j)} - H \bar{\hat{\bx}}_{t+1} & = H \nabla \bfor\left(\bar{\hat{\bth}}_{t+1} \right)   \left( \hat{\bth}_{t}^{(j)} - \bar{\hat{\bth}}_{t}  
 - K_t\left( H\hat{\bx}_t^{(j) }- H\bar{\hat{\bx}}_t  \right)  + K_t \left( \by_t^{(j)} - \bar{\by}  \right) \right) \nonumber \\
&~~~~ +\mathcal{O}\left( \left\| \hat{\bth}_{t+1}^{(j)} - \bar{\hat{\bth}}_{t+1}  \right\|^2 \right)\;.  \label{eq:Hx_deviation}
\end{align}
Define a new state vector that includes the model parameters and the reconstructed outputs, centered with respect the corresponding means
\begin{equation}
\bz \doteq \left[ \bth - \bar{\bth}, H\bx - H\bar{\bx} \right]^T\;,
\end{equation}
and combine with Eqs.~\eqref{eq:theta_deviation}) and \eqref{eq:Hx_deviation} to yield the following update equation for $\bz$
\begin{equation}
\hat{\bz}_{t+1}^{(j)} = A_t \hat{\bz}_t^{(j)} + A_t \Delta_{\by,t}^{(j)} + \Lambda_{t}^{(j)}\;,\label{eq:z_update}
\end{equation}
where the evolution matrix
\begin{equation}
A_t\doteq
\begin{bmatrix}
     I               &    - K_t \\
      H \nabla \bfor\left(\bar{\hat{\bth}}_{t+1} \right)  & -H \nabla \bfor\left(\bar{\hat{\bth}}_{t+1} \right)  K_t
\end{bmatrix}\;,
\end{equation}
\begin{equation}
\Delta_{\by,t}^{(j)} \doteq
-
\begin{bmatrix}
    0   \\
   \by_t^{(j)} - \bar{\by}
\end{bmatrix}
, ~~
\mathcal{O}_{\bth,t}^{(j)} \doteq   \mathcal{O}\left( \left\| \hat{\bth}_{t+1}^{(j)} - \bar{\hat{\bth}}_{t+1}  \right\|^2 \right)
,~~
\Lambda_{t}^{(j)} \doteq
\begin{bmatrix}
    0    \\
  \mathcal{O}_{\bth,t}^{(j)} 
\end{bmatrix}\;.
\end{equation}
Therefore the evolution equation of the covariance matrix of $\hat{\bz}$ can be obtained from (\ref{eq:z_update})
\begin{align}
&C_{\hat{\bz}_{t+1},\hat{\bz}_{t+1}}  = A_t C_{\hat{\bz}_{t},\hat{\bz}_{t}}  A_t^T + A_t C_{\Delta_{\by,t}, \Delta_{\by,t}}A_t^T+ A_t C_{\hat{\bz}_{t},\Lambda_{t}} + C_{\Lambda_{t},\hat{\bz}_{t}}  A_t^T + C_{\Lambda_{t}, \Lambda_{t}}
\;, \label{eq:cov_update}
\end{align}
where
\begin{equation}
C_{\hat{\bz}_{t+1},\hat{\bz}_{t+1}} = 
\begin{bmatrix}
C_{\hat{\bth}_{t+1},\hat{\bth}_{t+1}} & C_{\hat{\bth}_{t+1},H\hat{\bx}_{t+1}} \\
C_{H\hat{\bx}_{t+1},\hat{\bth}_{t+1}} & C_{H\hat{\bx}_{t+1},H\hat{\bx}_{t+1}} \\
\end{bmatrix}\;, \label{eq:cov_z_matrix}
\end{equation}
\begin{equation}
C_{\Delta_{\by,t},\Delta_{\by,t}} = 
\begin{bmatrix}
0 & 0 \\
0 & C_{\by_t,\by_t} \\
\end{bmatrix}
= 
\begin{bmatrix}
0 & 0 \\
0 & \Gamma \\
\end{bmatrix}\;.
\end{equation}
Note that in obtaining Eq.~\eqref{eq:cov_update} we have used that the observations $\by$ are independent of the new state $\bz$, and thus $\Delta_{\by,t}$ is independent of $\hat{\bz}_t$ and $\Lambda_t$. 
To understand how different terms in Eq.~\eqref{eq:cov_z_matrix} evolve, Eq.~\eqref{eq:cov_update} is re-written in the matrix form 
\begin{align}
\begin{bmatrix}
C_{\hat{\bth}_{t+1},\hat{\bth}_{t+1}} & C_{\hat{\bth}_{t+1},H\hat{\bx}_{t+1}} \\
C_{H\hat{\bx}_{t+1},\hat{\bth}_{t+1}} & C_{H\hat{\bx}_{t+1},H\hat{\bx}_{t+1}} \\
\end{bmatrix}
& =
A_t
\begin{bmatrix}
C_{\hat{\bth}_{t},\hat{\bth}_{t}} & C_{\hat{\bth}_{t},H\hat{\bx}_{t}} \\
C_{H\hat{\bx}_{t},\hat{\bth}_{t}} & C_{H\hat{\bx}_{t},H\hat{\bx}_{t}} + \Gamma \\
\end{bmatrix}
A_t^T \nonumber \\
&
+ A_t 
\begin{bmatrix}
0 & C_{\hat{\bth}_{t},\Delta_{t}} \\
0 & C_{H\hat{\bx}_{t}, \Delta_{t}} \\
\end{bmatrix}
+ 
\begin{bmatrix}
0 & 0 \\
C_{\Delta_{t}, \hat{\bth}_t}  & C_{\Delta_{t}, H\hat{\bx}_{t}} \\
\end{bmatrix}
A_t^T  \nonumber \\
& + 
\begin{bmatrix}
0 & 0 \\
0 & C_{\Delta_{t}, \Delta_{t}}
\end{bmatrix}\;.
\label{eq:cov_update_matrix}
\end{align}
Clearly the properties of matrix $A_t$ determine the evolution of the covariances. Thus, we next consider the spectral properties of $A_t$. To reduce notational clutter, let 
\begin{equation}
A_t = 
\begin{bmatrix}
     I               &    - K_t \\
      H \nabla \bfor\left(\bar{\hat{\bth}}_{t+1} \right)  & -H \nabla \bfor\left(\bar{\hat{\bth}}_{t+1} \right)  K_t
\end{bmatrix}
\doteq \begin{bmatrix}
     I_{d_{\bth} \times d_{\bth}}              &    a \\
      b  & ba
\end{bmatrix}\;,
\end{equation}
and the characteristic equation for $A_t$ is given by 
\begin{align}
\det(A_t - \lambda I) &= \det\left((1-\lambda)I_{d_{\bth}\times d_{\bth}}\right) \det\left(ba -\lambda I_{d_y \times d_y}  - b \left((1-\lambda)I_{d_{\bth}\times d_{\bth}}\right)^{-1}a  \right) \nonumber \\
&= (1- \lambda)^{d_{\bth} - d_y} \lambda^{d_y} \det\left( \lambda I_{d_y\times d_y} - (I_{d_y\times d_y}  + ba)\right) = 0\;,
\end{align}
where $d_{\bth}$ and $d_{\by}$ are the dimensions for the model parameters $\bth$ and the observed outputs $\by$, respectively.
Solving the equation above yields the three different kinds of eigenvalues (the multiplicities are not necessarily equal to one):
\begin{align}
\lambda_1 &= 0\;, \\
\lambda_2 &= \lambda(I + ba) = \lambda\left(I - H\nabla \bfor\left( \bar{\hat{\bth}}_{t+1} \right) K_t \right) \approx \lambda\left( \Gamma \left( HC_{\hat{\bx}_t \hat{\bx}_t} H^T + \Gamma \right)^{-1} \right)\;, \\
\lambda_3 &= 1\;,
\end{align}
where we have used $\nabla \bfor(\bar{\hat{\bth}}_{t+1}) (\hat{\bth}_{t} - \bar{\hat{\bth}}_{t}) \approx \bfor(\hat{\bth}_{t}) - \bfor(\bar{\hat{\bth}}_{t})$. 
Note that $\lambda_2$ are the eigenvalues of the matrix $\Gamma \left( HC_{\hat{\bx}_t \hat{\bx}_t} H^T + \Gamma \right)^{-1}$, and therefore $0<\lambda_2 <1$. Additionally, $\lambda_3$ occurs for under-observed problems where the dimensionality of observations is less than the dimensionality of the hidden parameters ($d_{\by} < d_{\bth}$), which is common in practical applications.

\subsubsection{Convergence of $C_{\hat{\bth},\hat{\bth}}$ and $C_{\hat{\bth}, H\hat{\bx}}$} 
From Eq.\ (\ref{eq:cov_update_matrix}), the update equation for $C_{\bth,\bth}$ is
\begin{equation} \label{eq:c-theta-theta-update}
C_{\hat{\bth}_{t+1},\hat{\bth}_{t+1}} = C_{\hat{\bth}_{t},\hat{\bth}_{t}} - C_{\hat{\bth}_{t},H\hat{\bx}_{t}} \left(C_{H\hat{\bx}_{t},H\hat{\bx}_{t}}  + \Gamma \right)^{-1}C_{H\hat{\bx}_{t}, \hat{\bth}_t}\;.
\end{equation}
Note that $\hat{\bth}_t$ is a vector of dimension $d_{\bth}$. For each scalar entry of $\hat{\bth}_t$, the evolution equation is
\begin{equation}
C_{\hat{\bth}_{t+1}^s,\hat{\bth}^s_{t+1}} = C_{\hat{\bth}^s_{t},\hat{\bth}^s_{t}} - C_{\hat{\bth}^s_{t},H\hat{\bx}_{t}} \left(C_{H\hat{\bx}_{t},H\hat{\bx}_{t}}  + \Gamma \right)^{-1}C_{H\hat{\bx}_{t}, \hat{\bth}^s_t}\;,
\end{equation}
where the superscript $s$ denotes that $\hat{\bth}^s_{t}$ is scalar. Since the second term on the right hand side of the above equation is non-negative, the sequence $\left\{ C_{\hat{\bth}^s_{t},\hat{\bth}^s_{t}} \right\}_{t=1}^{\infty}$ is a monotone decreasing sequence for which all elements are bounded below ($\hat{\bth}^s_{t} \geq 0$). Thus the sequence is convergent based on monotone convergence theorem. This indicates that the incremental term must converge to zero, i.e.
\begin{equation}
\lim_{t\rightarrow \infty} C_{\hat{\bth}^s_{t},H\hat{\bx}_{t}} \left(C_{H\hat{\bx}_{t},H\hat{\bx}_{t}}  + \Gamma \right)^{-1}C_{H\hat{\bx}_{t}, \hat{\bth}^s_t} = 0
~~\Rightarrow~~
\lim_{t\rightarrow \infty}C_{\hat{\bth}^s_{t},H\hat{\bx}_{t}} = 0, \forall s = 1,..., d_{\bth}\;,
\end{equation}
which is equivalent to
\begin{equation}
\lim_{t\rightarrow \infty}C_{\hat{\bth}_{t},H\hat{\bx}_{t}} = 0\;. \label{eq:C_theta,Hx=0}
\end{equation} 
This further indicates the convergence of $C_{\hat{\bth}_t,\hat{\bth}_t}$ as 
\begin{equation}
\lim_{t\rightarrow \infty} \left\| C_{\hat{\bth}_{t+1}, \hat{\bth}_{t+1}} - C_{\hat{\bth}_t, \hat{\bth}_t} \right\|_F = 0\;,
\end{equation}
and the sequence of $C_{\hat{\bth}_t,\hat{\bth}_t}$ lies in a complete space (based on the Cauchy Convergence Criterion). However, this does not mean that the $C_{\hat{\bth}_t,\hat{\bth}_t}$ will converge to zero  (In real applications, we only require the covariance of $C_{\hat{\bth}, \hat{\bth}}$ to decrease below a preset tolerance for true convergence \cite{iglesias2013ensemble}). And in the case of $d_y < d_{\bth}$, $C_{\hat{\bth}_t,\hat{\bth}_t}$ will not converge to zero. 
This is related to the eigenvalue $\lambda_3 =1$ for  the evolution matrix $A_t$. To see this, the eigenvector of the matrix $A_t$ for $\lambda_3 = 1$ is solved,
\begin{align}
\left(A_t - \lambda_3 I \right) \bz  
= 
\begin{bmatrix}
     \bf{0}              &    a \\
      b  & ba - I
\end{bmatrix}
\begin{bmatrix}
    z_{\bth}     \\
      z_{Hx}   
\end{bmatrix}
 = 0
 \Rightarrow z_{Hx} = 0\;.
\end{align}
Therefore, the eigenvector $v_3$ corresponding to $\lambda_3 = 1$ lies in the subspace of $\bth$ and has zero component in the $Hx$ direction. 
The evolution matrix $A_t$ plays a key role in shrinking the covariances $C_{\hat{\bth}_t,\hat{\bth}_t}$, $C_{\hat{\bth}_t,H\hat{\bx}_t}$ and $C_{H\hat{\bx}_t, H\hat{\bx}_t}$ because $0<\lambda_1, \lambda_2<1$. However it does not have the variance shrinking effect in the direction of the eigenvector $v_3$ corresponding to $\lambda_3 = 1$ (violating the contraction requirement). Therefore, due to the non-shrinking direction $v_3$ lies in the subspace of $\bth$,  the sequence of $C_{\hat{\bth}_t,\hat{\bth}_t}$ will converge, but will not converge to $\bf{0}$.
\begin{align}
C_{\hat{\bth}_{t+1},H\hat{\bx}_{t+1}} = &~ C_{\hat{\bth}_{t},H\hat{\bx}_{t}} - C_{\hat{\bth}_{t},H\hat{\bx}_{t}} \left( C_{H\hat{\bx}_t, H\hat{\bx}_t} + \Gamma \right)^{-1} C_{H\hat{\bx}_t, H\hat{\bx}_t} \nonumber \\
&+ C_{\hat{\bth}_t, \Delta_t} - C_{\hat{\bth}_t, H\hat{\bx}_t} \left( C_{H\hat{\bx}_t, H\hat{\bx}_t} + \Gamma \right)^{-1} C_{H\hat{\bx}_t, \Delta_t}
\end{align}
\subsubsection{Convergence of $C_{H\hat{\bx}, H\hat{\bx}}$}
Based on Eq.\ (\ref{eq:cov_update_matrix}), the evolution equation for $C_{H\hat{\bx}, H\hat{\bx}}$ can be derived as 
\begin{align}
C_{H\hat{\bx}_{t+1},H\hat{\bx}_{t+1}} =&~ H \nabla \bfor\left(\bar{\hat{\bth}}_{t+1}  \right) 
\left(  C_{\hat{\bth}_{t},\hat{\bth}_{t}} - C_{\hat{\bth}_{t},H\hat{\bx}_{t}} \left(C_{H\hat{\bx}_{t},H\hat{\bx}_{t}}  + \Gamma \right)^{-1}C_{H\hat{\bx}_{t}, \hat{\bth}_t}\right) 
 \left( H \nabla \bfor\left(\bar{\hat{\bth}}_{t+1} \right) \right)^T \nonumber \\
& +  H \nabla \bfor\left(\bar{\hat{\bth}}_{t+1}  \right)  
\left(   C_{\hat{\bth}_t, \Delta_t} - C_{\hat{\bth}_t, H\hat{\bx}_t} \left( C_{H\hat{\bx}_t, H\hat{\bx}_t} + \Gamma \right)^{-1} C_{H\hat{\bx}_t, \Delta_t}   \right) \nonumber \\
& + \left(   C_{\Delta_t, \hat{\bth}_t} -  C_{\Delta_t, H\hat{\bx}_t} \left( C_{H\hat{\bx}_t, H\hat{\bx}_t} + \Gamma \right)^{-1} C_{H\hat{\bx}_t, \hat{\bth}_t}    \right)
\left( H \nabla \bfor\left(\bar{\hat{\bth}}_{t+1} \right) \right)^T  \nonumber \\
& + C_{\Delta_{t}, \Delta_{t}}\;,
\end{align}
and by applying Eq.\ (\ref{eq:theta_deviation}), the above equation can be simplified to
\begin{align} \label{eq:C_hxhx_reduce}
C_{H\hat{\bx}_{t+1},H\hat{\bx}_{t+1}} =&~ H \nabla \bfor\left(\bar{\hat{\bth}}_{t+1}  \right) 
C_{\hat{\bth}_{t+1}, \hat{\bth}_{t+1}} 
 \left( H \nabla \bfor\left(\bar{\hat{\bth}}_{t+1} \right) \right)^T \nonumber \\
& +  H \nabla \bfor\left(\bar{\hat{\bth}}_{t+1}  \right)  
C_{\hat{\bth}_{t+1}, \Delta_{t}} 
 + C_{\Delta_{t}, \hat{\bth}_{t+1}}
\left( H \nabla \bfor\left(\bar{\hat{\bth}}_{t+1} \right) \right)^T  \nonumber \\
& + C_{\Delta_{t}, \Delta_{t}} \;. 
\end{align}
Because $C_{\hat{\bth}_t, H\hat{\bx}_t}$ is equal to zero in the steady state based on Eq.\ (\ref{eq:C_theta,Hx=0}), a steady state condition of the nonlinear term $\Delta_t$ can be derived 
\begin{align}
C_{\hat{\bth}_t, H\hat{\bx}_t} &= E\left(\hat{\bth}_t - \bar{\hat{\bth}}_t, H\nabla \bfor\left(\bar{\hat{\bth}}_t \right) \left(\hat{\bth}_t - \bar{\hat{\bth}}_t\right)+ \Delta_{t-1} \right)  \nonumber \\
&= C_{\hat{\bth}_t, \hat{\bth}_t} \left(  H\nabla \bfor\left(\bar{\hat{\bth}}_t \right)  \right)^T + C_{\hat{\bth}_t, \Delta_{t-1}} = 0\;,
\end{align}
which is equivalent to 
\begin{align} \label{eq:stationary}
 C_{\hat{\bth}_t, \Delta_{t-1}} = - C_{\hat{\bth}_t, \hat{\bth}_t} \left(  H\nabla \bfor\left(\bar{\hat{\bth}}_t \right)  \right)^T \;.
\end{align}
This is the stationary condition for the nonlinear term.
Substituting the stationary condition (\ref{eq:stationary}) into Eq.\ (\ref{eq:C_hxhx_reduce}) yields 
\begin{align} \label{eq:Hx t+1 temp}
C_{H\hat{\bx}_{t+1},H\hat{\bx}_{t+1}} =&~ H \nabla \bfor\left(\bar{\hat{\bth}}_{t+1}  \right) 
C_{\hat{\bth}_{t+1}, \hat{\bth}_{t+1}} 
 \left( H \nabla \bfor\left(\bar{\hat{\bth}}_{t+1} \right) \right)^T \nonumber \\
& -  H \nabla \bfor\left(\bar{\hat{\bth}}_{t+1}  \right) 
C_{\hat{\bth}_{t+1}, \hat{\bth}_{t+1}} 
 \left( H \nabla \bfor\left(\bar{\hat{\bth}}_{t+1} \right) \right)^T
 - H \nabla \bfor\left(\bar{\hat{\bth}}_{t+1}  \right) 
C_{\hat{\bth}_{t+1}, \hat{\bth}_{t+1}} 
 \left( H \nabla \bfor\left(\bar{\hat{\bth}}_{t+1} \right) \right)^T  \nonumber \\
& + C_{\Delta_{t}, \Delta_{t}}  \nonumber \\
=&~ -H \nabla \bfor\left(\bar{\hat{\bth}}_{t+1}  \right) 
C_{\hat{\bth}_{t+1}, \hat{\bth}_{t+1}} 
 \left( H \nabla \bfor\left(\bar{\hat{\bth}}_{t+1} \right) \right)^T 
 + C_{\Delta_{t}, \Delta_{t}}\;.
\end{align}
The nonlinear term $\Delta_t  =  \mathcal{O}\left( \left\| \hat{\bth}_{t+1} - \bar{\hat{\bth}}_{t+1}  \right\|^2 \right)$ as a random variable can be decomposed into two parts: (a) a term that is correlated with $\hat{\bth}_{t+1}$ and (b) a term that is uncorrelated
\begin{equation}
\Delta_t = \beta^T \hat{\bth}_{t+1} + \gamma_{t+1}
\end{equation}
where $\beta$ can be obtained by using standard linear regression and applying Eq.\ (\ref{eq:stationary}):
\begin{equation}
\beta = C_{\hat{\bth}_{t+1},\hat{\bth}_{t+1}}^{-1} C_{\hat{\bth}_{t+1}, \Delta_t} = C_{\hat{\bth}_{t+1},\hat{\bth}_{t+1}}^{-1} C_{\hat{\bth}_{t+1}, \hat{\bth}_{t+1}} 
 \left( H \nabla \bfor\left(\bar{\hat{\bth}}_{t+1} \right) \right)^T =  -\left( H \nabla \bfor\left(\bar{\hat{\bth}}_{t+1} \right) \right)^T\;,
\end{equation}
and the covariance of $\Delta_t$ can be computed as 
\begin{align}
C_{\Delta_t, \Delta_t} &= \beta^T C_{\hat{\bth}_{t+1}, \hat{\bth}_{t+1}}\beta + C_{\gamma_{t+1}, \gamma_{t+1}} \nonumber \\
&= H \nabla \bfor\left(\bar{\hat{\bth}}_{t+1}  \right) 
C_{\hat{\bth}_{t+1}, \hat{\bth}_{t+1}} 
 \left( H \nabla \bfor\left(\bar{\hat{\bth}}_{t+1} \right) \right)^T
 + C_{\gamma_{t+1}, \gamma_{t+1}} 
\end{align}
Therefore, Eq.\ (\ref{eq:Hx t+1 temp}) can be reduced to the following simple form in the steady state.
\begin{equation}
C_{H\hat{\bx}_{t+1},H\hat{\bx}_{t+1}} =  C_{\gamma_{t+1}, \gamma_{t+1}} 
\end{equation}
where $\gamma_{t}$ represents the uncorrelated component of the nonlinear effect and it cannot be reduced by usual Kalman iterations.
Intuitively, this can also be directly observed from Eq.\ (\ref{eq:cov_update_matrix}) as the nonlinear term $C_{\Delta_{t}, \Delta_{t}}$ on the right hand side that is not shrunk by $A_t$ acts directly to the entry corresponding to $C_{H\hat{\bx}, H\hat{\bx}}$.

In conclusion, for the IEnKF, 
\begin{itemize}
\item $C_{\hat{\bth}_{t}, H\hat{\bx}_{t}} $ will converge to zero as $t$ increases.
\item $C_{H\hat{\bx}_{t}, H\hat{\bx}_{t}}$ will generally not converge to zero because of the nonlinearity effect of $\bfor(\bth)$. 
\item $C_{\hat{\bth}_{t}, \hat{\bth}_{t}}$ will generally not converge to zero because the insufficiency of observations.
\end{itemize} 

\subsubsection{Example: Evolution of covariances}
We return to the example in \S\ref{sec:example} to demonstrate the above conclusions. 
Figure~\ref{fig:evolution_cov_no_resampling} plot the evolution of the norms of the various covariance matrices, Kalman gain and innovation. 
The covariance matrix $C_{\hat{\bth}_{t}, H\hat{\bx}_{t}} $ converges to zero in relatively few iterations.
Consequently, $C_{\hat{\bth}_{t}, \hat{\bth}_{t}}$ stops updating.
$C_{H\hat{\bx}_{t}, H\hat{\bx}_{t}}$ oscillates and fails to converge due to the nonlinearity. 
The Kalman gain converges to zero, and the innovation fails to improve with subsequent iterations. This example demonstrated a typical situation that standard ensemble Kalman filter fails due to early stopping of the Kalman updates.
The numerical simulation results are consistent with the theoretical analysis above.

\begin{figure}[h]
  \centering
  \begin{subfigure}[t]{0.19\textwidth}
    \includegraphics[width=\textwidth]{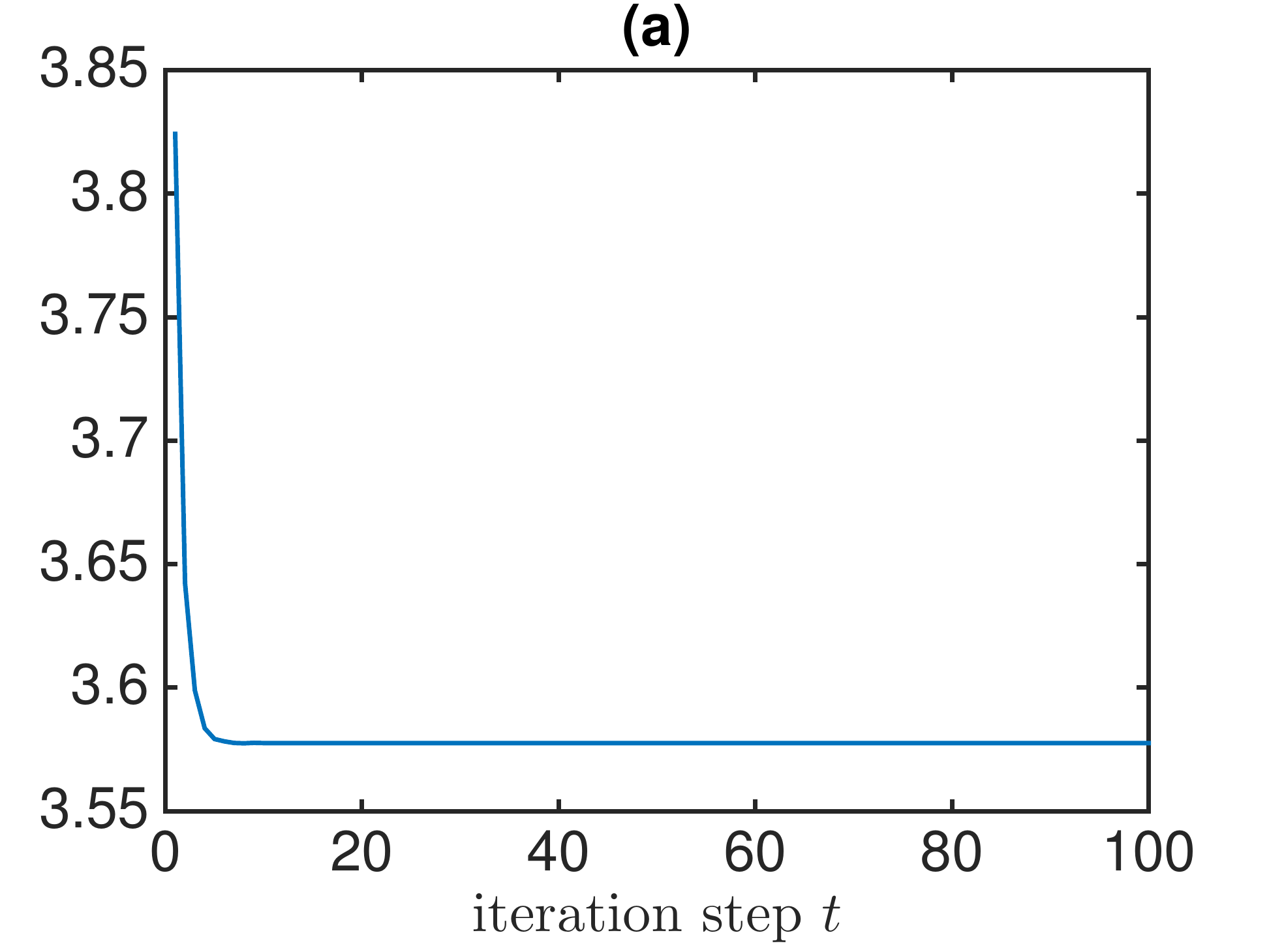}
    \caption{$\| C_{\hat{\bth}, \hat{\bth}} \|$}
  \end{subfigure}
  \begin{subfigure}[t]{0.19\textwidth}
    \includegraphics[width=\textwidth]{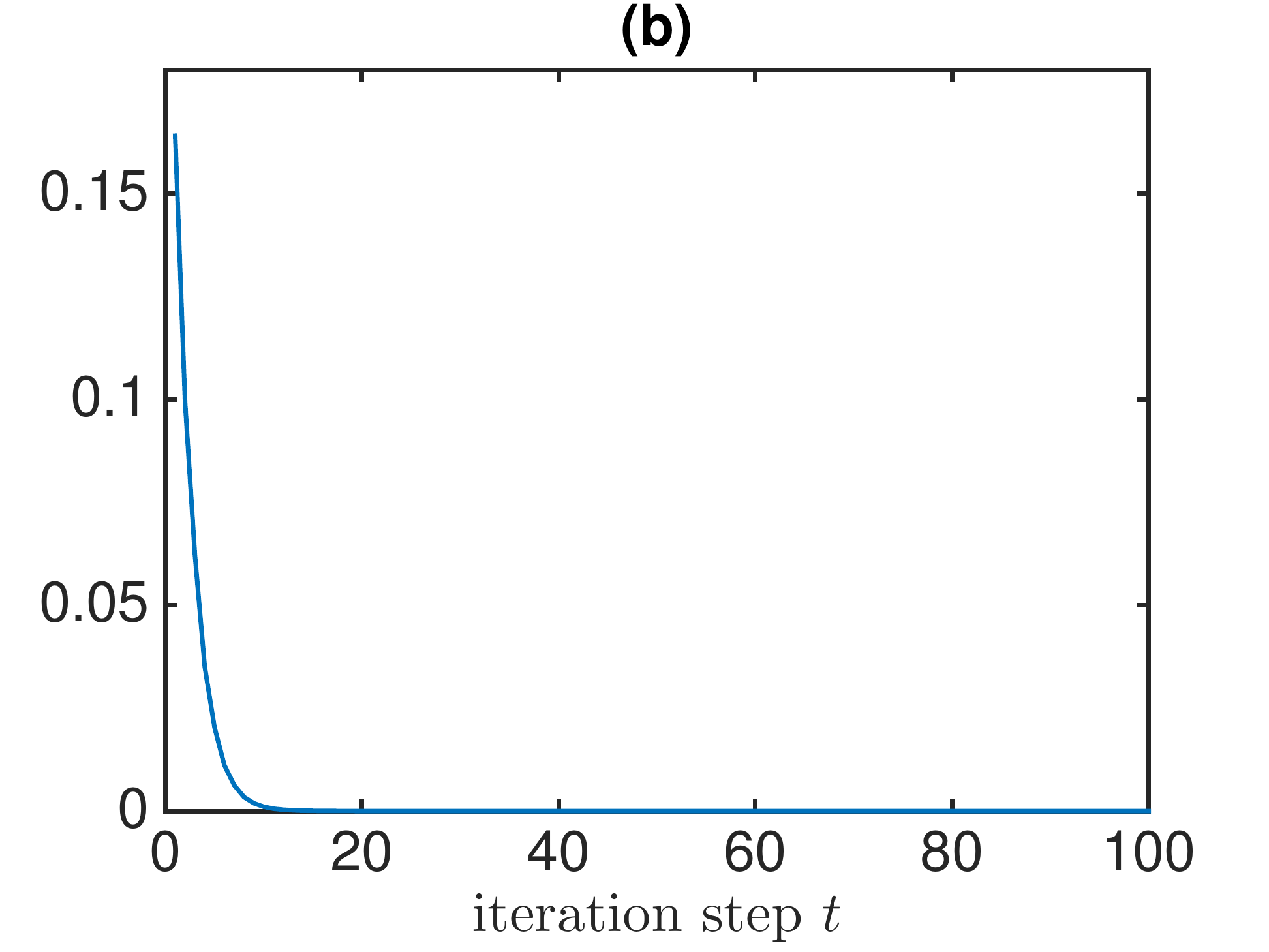}
    \caption{$\| C_{\hat{\bth}, H\hat{\bx}} \|$}
  \end{subfigure}
  \begin{subfigure}[t]{0.19\textwidth}
    \includegraphics[width=\textwidth]{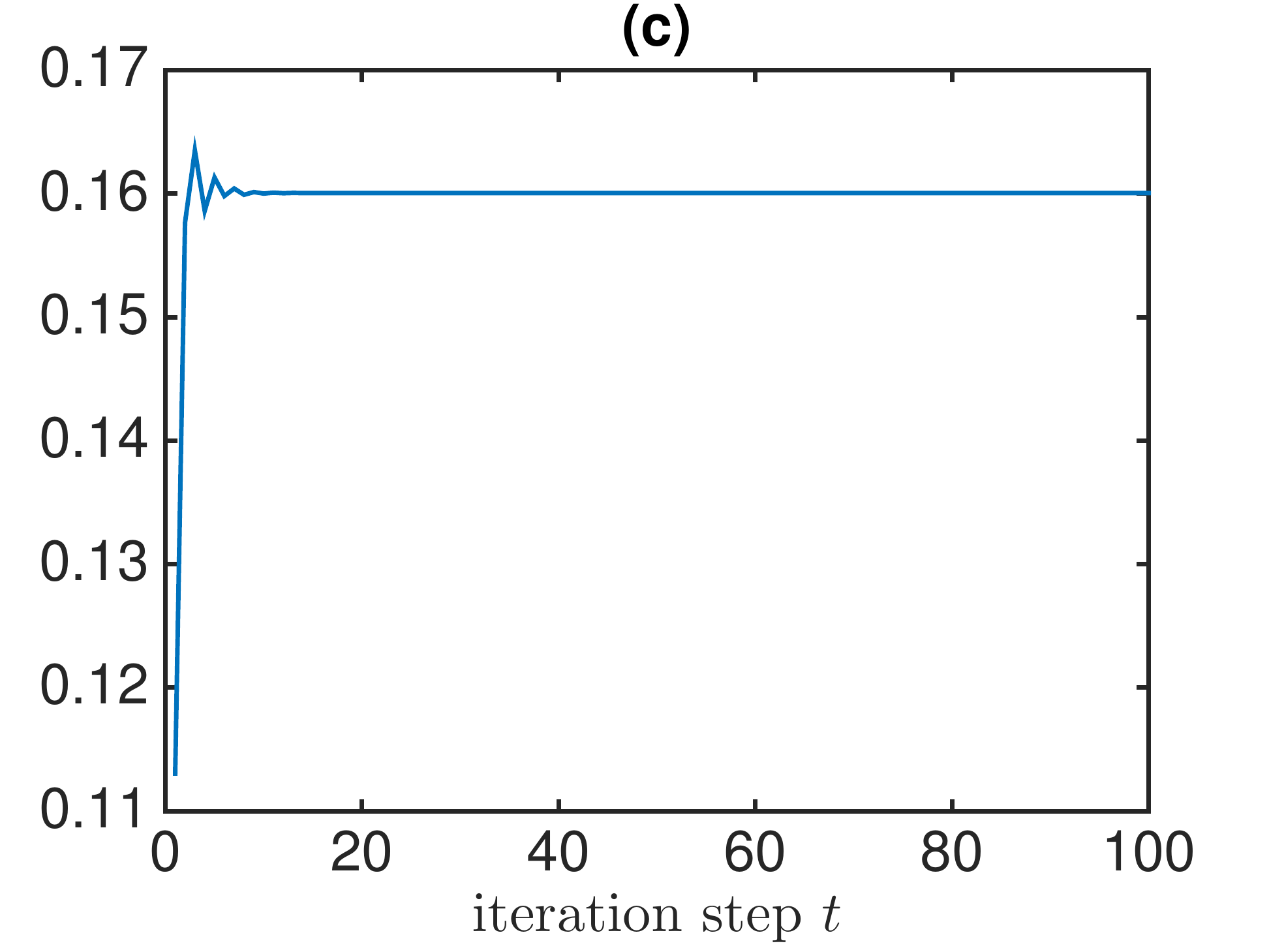}
    \caption{$\left\| C_{H\hat{\bx}, H\hat{\bx}} \right\|$}
  \end{subfigure}
  \begin{subfigure}[t]{0.19\textwidth}
    \includegraphics[width=\textwidth]{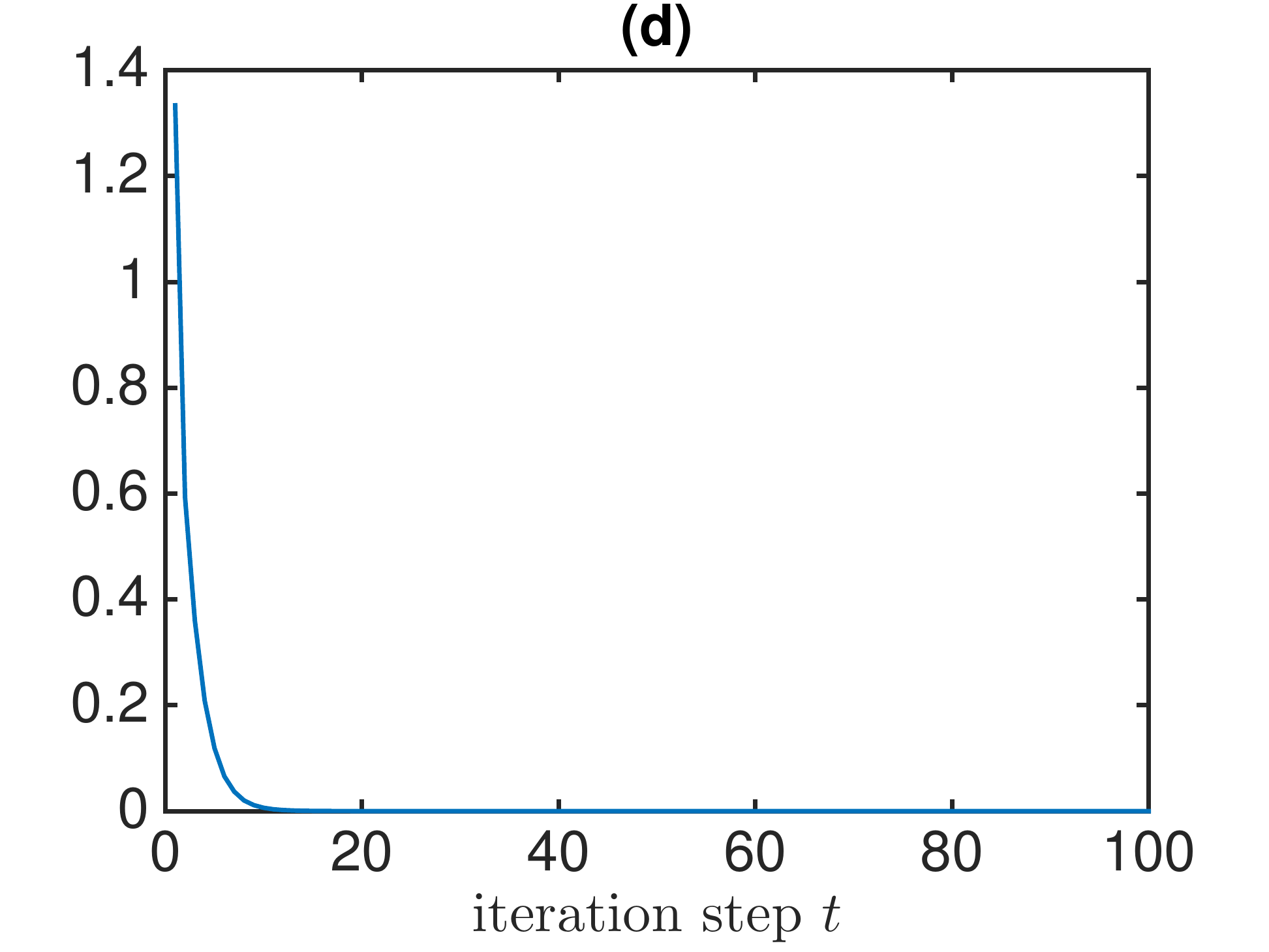}
    \caption{$\left\| K_t\right\|$}
  \end{subfigure}
   \begin{subfigure}[t]{0.19\textwidth}
    \includegraphics[width=\textwidth]{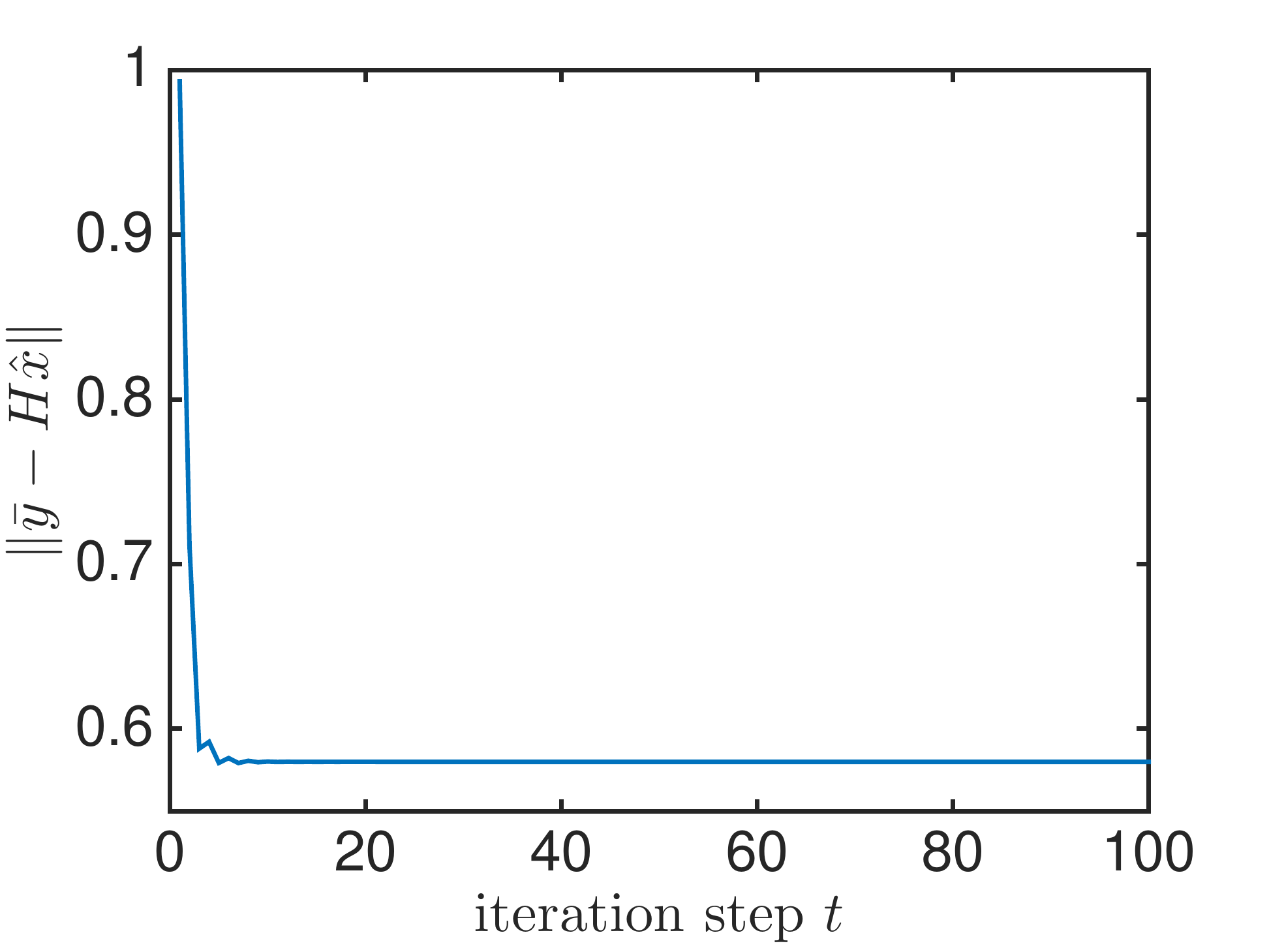}
    \caption{$\left\| \bar{\by} - H \bx_t \right\|$}
  \end{subfigure}
\caption{Evolution of the norms of the covariances, Kalman gain and innovation for the IEnKF applied to the example in \S\ref{sec:example}.}
\label{fig:evolution_cov_no_resampling}
\end{figure}

\subsection{Steady-state}
In \S\ref{sec:nonlinearity}, we focused on the transient behavior of the various covariance matrices and their relation to the ``early stopping" of the IEnKF in nonlinear problems. 
In this section, the steady-state behavior of the IEnKF is studied as well as the effect of observation uncertainty $\Gamma$.

Let $t=f$ denote the iteration number where $\bth_t$ is effectively unchanged with further iterations. 
Denote the (fixed) ensemble for $\bth$ at this step (and beyond) as
\begin{equation}
\left\{ \bth_f^{(j)} \right\}_{j=1}^J\;.
\end{equation}
There is no need to distinguish between the prior and posterior ensemble of $\bth$ as they are assumed the same for $t \geq f$. 
Therefore, the prior ensemble of the state $x$ will be 
\begin{equation}
\left\{ \hat{\bx}_f^{(j)} \right\}_{j=1}^J = \left\{ \bfor\left(\bth_f^{(j)}\right) \right\}_{j=1}^J\;,
\end{equation}
and will not change with respect to $t$ since $\left\{ \bth_f^{(j)} \right\}_{j=1}^J$ is fixed. 
The mean of the prior ensemble for $x$ is 
\begin{equation}
\bar{\hat{\bx}}_f  = \frac{1}{J} \sum _{j=1}^J \bfor\left(\bth_f^{(j)}\right)\;.
\end{equation}
The mean of the posterior ensemble of $x$ is 
\begin{align}
\bar{\bx}_f &= \frac{1}{J}\sum_{j=1}^J x_f^{(j)} \nonumber \\
& = \frac{1}{J}\sum_{j=1}^J \left( \hat{\bx}_f^{(j)} + C_{\hat{\bx}_f \hat{\bx}_f} H^T \left( HC_{\hat{\bx}_f \hat{\bx}_f} H^T + \Gamma \right)^{-1}\left( \by_t^{(j)} - H\hat{\bx}_f^{(j)} \right) \right)  \nonumber \\
& = \bar{\hat{\bx}}_f  + C_{\hat{\bx}_f \hat{\bx}_f} H^T \left( HC_{\hat{\bx}_f \hat{\bx}_f} H^T + \Gamma \right)^{-1}\left( \bar{\by} - H\bar{\hat{\bx}}_f \right)\;.
\end{align}
based on the relation between prior and posterior state estimation Eq.\ (\ref{eq:KF}b).
Therefore the final difference (i.e., oscillation magnitude) of the prior and posterior mean of the state $x$ is 
\begin{align}
\bar{\bx}_f - \bar{\hat{\bx}}_f = C_{\hat{\bx}_f \hat{\bx}_f} H^T \left( HC_{\hat{\bx}_f \hat{\bx}_f} H^T + \Gamma \right)^{-1}\left( \bar{\by} - H\bar{\hat{\bx}}_f \right)\;.
\end{align}
Left multiplying the above by $H$, 
\begin{align}
H\bar{\bx}_f - H\bar{\hat{\bx}}_f = H C_{\hat{\bx}_f \hat{\bx}_f} H^T \left( HC_{\hat{\bx}_f \hat{\bx}_f} H^T + \Gamma \right)^{-1}\left( \bar{\by} - H\bar{\hat{\bx}}_f \right)\;,
\label{eq:oscillation_magnitude}
\end{align}
gives the oscillation magnitude of the reconstructed output.
We can also obtain the relation between the output reconstruction error of the prior mean and posterior mean:
\begin{align} \label{eq:steady_state_error}
H\bar{\bx}_f - \bar{\by} = \Gamma \left( HC_{\hat{\bx}_f \hat{\bx}_f} H^T + \Gamma \right)^{-1}\left( H\bar{\hat{\bx}}_f - \bar{\by} \right)\;.
\end{align}
It can be seen from Eq.\ (\ref{eq:oscillation_magnitude}) and (\ref{eq:steady_state_error}) that the relative magnitudes of $\Gamma$ and $C_{H\hat{\bx}_f, H\hat{\bx}_f}$ determine the steady-state oscillation magnitude and mean output reconstruction error.
Based on the derivations above, the following summarizes the steady-state behavior of the standard IEnKF:
\begin{itemize}
\item The prior and posterior ensembles of the parameter $\bth$ will approach to a fixed ensemble $\left\{ \bth_f^{(j)} \right\}_{j=1}^J$.
\item The estimate for the system state $x$ will oscillate between a prior and posterior ensemble that both are fixed with respect to $t$. The oscillation magnitude of the prior and posterior means is given by (\ref{eq:oscillation_magnitude}).
\item The larger the uncertainty in the observation, $\Gamma$, the smaller the oscillation magnitude, $H\bar{\bx}_f - H\bar{\hat{\bx}}_f $;
\item The larger the uncertainty in the observation, $\Gamma$, the larger the final mean output reconstruction error, $H\bar{\bx}_f - \bar{\by}$. 
\end{itemize}


\subsection{Example}

Figure \ref{fig:compare_different_gamma} plots the convergence of the system state for different observation uncertainties $\Gamma = 0.1$ and $\Gamma = 0.0001$. It shows that the oscillation magnitude is smaller in the case of $\Gamma = 0.1$ than that for $\Gamma = 0.0001$. The intuition behind this is that when the observation uncertainty $\Gamma$ is smaller, the magnitude of the Kalman filter is larger. Therefore, the Kalman updates tend to be larger. It can also be shown that when $\Gamma$ is smaller, the posterior after each Kalman update is closer to the line of $\bar{\by} - Hx = 0$. This is because when the observation has less uncertainty, the reconstructed output $Hx$ is more likely to approach to the observation $\bar{\by}$.

\begin{figure}
  \centering
  \begin{tabular}{cc}
    \includegraphics[width=0.4\columnwidth]{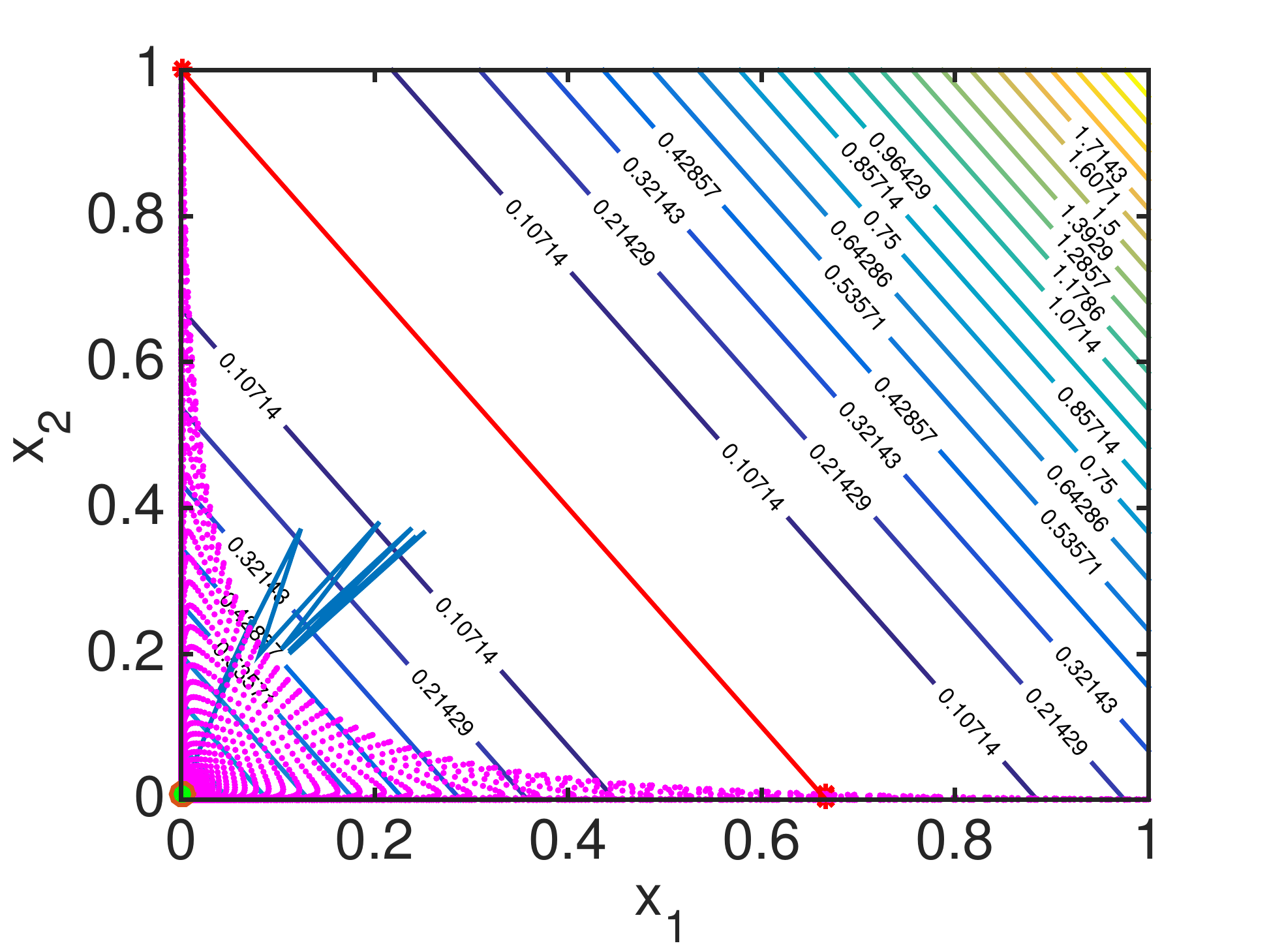}&
    
    \includegraphics[width=0.4\columnwidth]{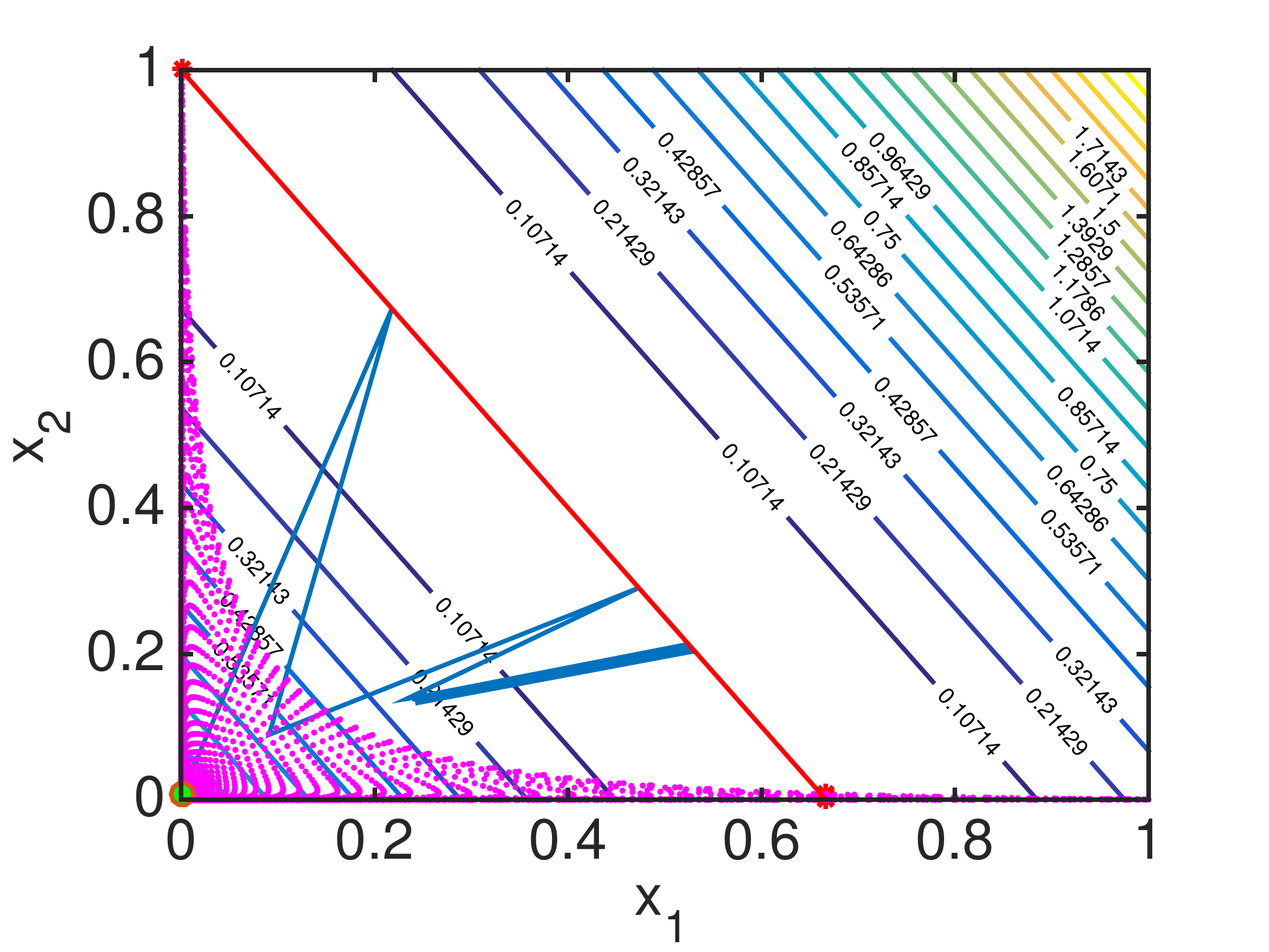}\\
  \end{tabular}
  \caption{(Left) the solution path of the ensemble mean of $x$ when $\Gamma = 0.1$. (Right) the solution path of the ensemble mean of $x$ when $\Gamma = 0.0001$.}
  \label{fig:compare_different_gamma}
  \end{figure}

\section{The Ensemble Resampling Method} \label{sec:resampling-method}
To prevent early stopping of the IEnKF we propose an {\em ensemble resampling method} whereby the prior ensemble members $\hat{\bth}_{t}^{(j)}$ and $\hat{\bx}_t^{(j)}$ are resampled before each Kalman update. For notational simplicity, we drop the hat $\hat{(\cdot)}$ and superscript $(j)$ unless needed. Namely, denote parameter and state values before resampling as random variables $\bth_{t}$ and $\bx_t$, and parameter and state values after resampling as random variables $\bth_{r,t}$ and $\bx_{r,t}$, with 
\begin{equation}
\bx_t = \bfor\left( \bth_t \right), ~~~\bx_{r,t} = \bfor\left( \bth_{r,t} \right)\;.
\end{equation}

The Kalman updates in Eq.~\eqref{eq:update-theta} and \eqref{eq:update-x} depends on the Kalman operators (Eq. \ref{eq:kalman-gain-theta} and \ref{eq:kalman-gain-x}), which effectively determine the ``update direction" in parameter/state space. These operators depend on the covariance matrices $C_{\hat{\bth}_t \hat{\bx}_t}$ and $C_{\hat{\bx}_t \hat{\bx}_t}$, which are updated each iteration based on the priors $\bth_t$ and $\bx_t$. Therefore, our strategy is to resample the prior ($\bth_t \leftarrow \bth_{r,t}$, $\bx_t \leftarrow \bx_{r,t}$) so that the covariance matrices (and hence Kalman operator) do not converging to zero before the innovation is minimized, while maintaining a consistent update direction with the traditional IEnKF strategy. We demonstrate in Thm.~\ref{Theorem:upper_bound} below that a consistent ``update direction'' is asymptotically maintained by ensuring the first and second moments of the prior distributions are maintained through the resampling process.   

\begin{theorem}
\label{Theorem:upper_bound}
Considering the IEnKF defined by the update step Eq.~\eqref{eq:KF} and the prediction step Eq.~\eqref{eq:forward}, if the first and second moment of the parameter $\bth$ are kept unchanged during the resampling process, i.e., 
\begin{align}
E\left(\bth_{r,t}\right) = E\left(\bth_{t}\right) = \bar{\bth}_t, ~~ Var\left(\bth_{r,t}\right) = Var\left(\bth_{t}\right) = \sigma_t^2\;. \nonumber 
\end{align}
then the deviations of the covariances have the following upper bounds
\begin{align}
\left\| C_{\bth_{r,t}, \bx_{r,t}} - C_{\bth_{t}, \bx_{t}} \right\| & \leq 2\sqrt{2} M\sigma_t^2\;,  \\
\left\| C_{\bx_{r,t}, \bx_{r,t}} - C_{\bx_{t}, \bx_{t}} \right\| & \leq 2\sqrt{2} M^2 \sigma_t^2\;, 
\end{align}
where the constant $M$ is a uniform upper bound for the gradient of the forward model mapping $\bfor(\bth)$, 
\begin{equation}
\left\| \nabla \bfor(\bth) \right\|\leq M\;.
\end{equation}
\end{theorem}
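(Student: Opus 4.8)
The plan is to write each covariance as an expectation over the parameter ensemble, Taylor-expand the forward model $\bfor$ about the common mean $\bar{\bth}_t$, and exhibit a leading term that is fixed by the first and second moments alone; since those moments are preserved under resampling, this leading term is identical for the original and resampled ensembles and cancels in the difference, leaving only a higher-order remainder that I control with the gradient bound $M$.

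First I would use that the parameter deviation is mean-zero, so subtracting a constant inside the centered product changes nothing; in particular
\begin{equation}
C_{\bth_t,\bx_t} = E\!\left[(\bth_t - \bar{\bth}_t)\bigl(\bfor(\bth_t) - \bfor(\bar{\bth}_t)\bigr)^T\right]. \nonumber
\end{equation}
Writing $\bfor(\bth_t) - \bfor(\bar{\bth}_t) = \nabla\bfor(\bar{\bth}_t)(\bth_t - \bar{\bth}_t) + R_t$ with remainder $R_t$, and using $E[(\bth_t-\bar{\bth}_t)(\bth_t-\bar{\bth}_t)^T] = \sigma_t^2$, I obtain
\begin{equation}
C_{\bth_t,\bx_t} = \sigma_t^2\,\nabla\bfor(\bar{\bth}_t)^T + E\!\left[(\bth_t-\bar{\bth}_t)R_t^T\right], \nonumber
\end{equation}
and an identical expression for $C_{\bth_{r,t},\bx_{r,t}}$ with $R_t$ replaced by the resampled remainder. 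Because $\bar{\bth}_t$ and $\sigma_t^2$ are preserved by resampling, the term $\sigma_t^2\nabla\bfor(\bar{\bth}_t)^T$ is the same in both, so $C_{\bth_{r,t},\bx_{r,t}} - C_{\bth_t,\bx_t}$ equals the difference of the two remainder expectations. This is precisely the sense in which resampling preserves the asymptotic update direction.

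Next I would estimate the remainder. Using the mean-value form $R_t = \int_0^1[\nabla\bfor(\bar{\bth}_t + s(\bth_t-\bar{\bth}_t)) - \nabla\bfor(\bar{\bth}_t)]\,ds\,(\bth_t-\bar{\bth}_t)$ together with $\|\nabla\bfor\|\le M$ gives a pointwise bound on $\|R_t\|$ proportional to $M\|\bth_t-\bar{\bth}_t\|$. Applying Cauchy–Schwarz to $E[(\bth_t-\bar{\bth}_t)R_t^T]$ then bounds each remainder expectation by a constant times $M\sigma_t^2$, and the triangle inequality over the two (differently distributed, hence separately estimated) ensembles yields the stated constant $2\sqrt{2}\,M\sigma_t^2$. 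For the state–state bound I would instead write $\bx_t - \bar{\bx}_t = \nabla\bfor(\bar{\bth}_t)(\bth_t-\bar{\bth}_t) + (R_t - E R_t)$ and expand the outer product: the leading term $\nabla\bfor(\bar{\bth}_t)\,\sigma_t^2\,\nabla\bfor(\bar{\bth}_t)^T$ is again moment-determined and cancels, while every surviving cross and remainder term now carries two factors of the forward-model gradient, producing the extra power of $M$ and the bound $2\sqrt{2}\,M^2\sigma_t^2$.

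The main obstacle I expect is the remainder estimate and the sharp constant: with only a first-order (gradient) bound on $\bfor$ available, the remainder is controlled only linearly in $\|\bth_t - \bar{\bth}_t\|$, so recovering the quadratic-in-$\sigma_t$ scaling requires carefully combining this with the $\bth$-deviation prefactor and applying Cauchy–Schwarz to the remainder's second moment rather than a crude bound inside each term. A secondary subtlety is that $\bth_t$ and $\bth_{r,t}$ follow different distributions, so the two remainder expectations cannot be coupled and must be bounded individually; matching only the first two moments is exactly enough to cancel the leading terms but not the remainders, which is why the deviation is nonzero yet provably $\mathcal{O}(M\sigma_t^2)$.
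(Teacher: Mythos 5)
Your decomposition---cancel the moment-determined leading term $\sigma_t^2\,\nabla\bfor(\bar{\bth}_t)^T$ and bound the two remainder expectations separately---is a genuinely different route from the paper's, and it does correctly establish the scalings $\mathcal{O}(M\sigma_t^2)$ and $\mathcal{O}(M^2\sigma_t^2)$. But it cannot deliver the stated constants, and your claim that the triangle inequality ``yields the stated constant $2\sqrt{2}\,M\sigma_t^2$'' does not survive a constant chase. With only the gradient bound $\left\|\nabla\bfor\right\|\le M$, the best generic bound on your integral-form remainder is $\left\|R_t\right\|\le 2M\left\|\bth_t-\bar{\bth}_t\right\|$ (a difference of two gradients, each of norm at most $M$); Cauchy--Schwarz then gives $\left\|E\left[(\bth_t-\bar{\bth}_t)R_t^T\right]\right\|\le 2M\sigma_t^2$ per ensemble, hence $4M\sigma_t^2 > 2\sqrt{2}\,M\sigma_t^2$ in total. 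The state--state bound fares worse: after cancelling $\nabla\bfor(\bar{\bth}_t)\,\sigma_t^2\,\nabla\bfor(\bar{\bth}_t)^T$, each ensemble contributes two cross terms of size up to $2M^2\sigma_t^2$ and a quadratic remainder term of size up to $4M^2\sigma_t^2$, giving something of order $16M^2\sigma_t^2$ rather than $2\sqrt{2}\,M^2\sigma_t^2$.

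The missing idea is precisely the coupling you explicitly renounce in your last paragraph. The paper never expands about $\bar{\bth}_t$ with a remainder; instead it inserts the \emph{mixed} covariance between the two ensembles, writing $C_{\bth_{r,t},\bx_{r,t}}-C_{\bth_t,\bx_t} = \left(C_{\bth_{r,t},\bx_{r,t}}-C_{\bth_{r,t},\bx_t}\right) + \left(C_{\bth_{r,t},\bx_t}-C_{\bth_t,\bx_t}\right) = E\left[\left(\bth_{r,t}-\bar{\bth}_t\right)\left(\bx_{r,t}-\bx_t\right)\right] + E\left[\left(\bth_{r,t}-\bth_t\right)\left(\bx_t-\bar{\bx}_t\right)\right]$, and then exploits the independence of the resampled ensemble from the original to compute $E\left(\bth_{r,t}-\bth_t\right)^2 = 2\sigma_t^2$ exactly, together with the mean value theorem applied along the segment between $\bth_t$ and $\bth_{r,t}$ (not about the mean), which gives $E\left(\bx_{r,t}-\bx_t\right)^2\le 2M^2\sigma_t^2$ with no additive remainder at all. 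Cauchy--Schwarz then bounds each of the two terms by exactly $\sqrt{2}\,M\sigma_t^2$ (resp.\ $\sqrt{2}\,M^2\sigma_t^2$), summing to $2\sqrt{2}$. So matching only two moments is not, as you suggest, the obstacle---the obstacle is that your per-ensemble remainder estimates discard the exact first-order (mean-value) representation of the differences $\bx_{r,t}-\bx_t$ and $\bx_t-\bar{\bx}_t$, which is what makes the sharp constant attainable. To repair your write-up, replace the Taylor-remainder cancellation by the insert-and-subtract of $C_{\bth_{r,t},\bx_t}$ (and $C_{\bx_{r,t},\bx_t}$ for the second bound) and invoke independence of $\bth_{r,t}$ and $\bth_t$.
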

\begin{proof}
Assume the forward model $\bfor(\bth)$ is continuous on the closed interval $\left[\bth_t, \bar{\bth}_t\right]$ and differentiable on the open interval $\left(\bth_t, \bar{\bth}_t\right)$, where $\bth_t$ is an ensemble member at the step $t$ and $\bar{\bth_t}$ is the ensemble mean. Without loss of generality, assume $\bth_t< \bar{\bth}_t$. Based on the mean value theorem, 
\begin{align}
\bx_t = \bfor\left( \bth_t \right) = \bfor\left( \bar{\bth}_t \right) + \nabla \bfor (\bth_\eta) \left(  \bth_t -\bar{\bth}_t  \right)
\end{align}
where $\bth_\eta$ is some value in the open interval $\left(\bth_t, \bar{\bth}_t\right)$. Taking the expectation of the above equation yields
\begin{align}
\bar{\bx}_t = E\left( \bfor\left( \bth_t \right) \right) = \bfor\left( \bar{\bth}_t \right) + E\left(\nabla \bfor \bfor(\bth_\eta) \left(  \bth_t -\bar{\bth}_t  \right)\right)\;.
\end{align}
Therefore the deviation of the ensemble member from the ensemble mean is 
\begin{align}
\bx_t - \bar{\bx}_t = \nabla \bfor(\bth_\eta) \left(  \bth_t -\bar{\bth}_t  \right) - E\left(\nabla \bfor(\bth_\eta) \left(  \bth_t -\bar{\bth}_t  \right)\right)\;.
\end{align}
From the upper bound on $\nabla \bfor(\bth)$, we can obtain an upper bound on the variance of the state before resampling 
\begin{align}
E\left(    \bx_t - \bar{\bx}_t   \right)^2 & = E\left(  \nabla \bfor(\bth_\eta) \left(  \bth_t -\bar{\bth}_t  \right) - E\left(\nabla \bfor(\bth_\eta) \left(  \bth_t -\bar{\bth}_t  \right)\right) \right)^2 \nonumber \\
& = E\left(  \nabla \bfor(\bth_\eta) \left(  \bth_t -\bar{\bth}_t  \right)\right)^2 - \left(E\left(\nabla \bfor(\bth_\eta) \left(  \bth_t -\bar{\bth}_t  \right)\right) \right)^2 \nonumber \\
& \leq E\left(  \nabla \bfor(\bth_\eta) \left(  \bth_t -\bar{\bth}_t  \right)\right)^2 \nonumber \\
& \leq M^2 E\left(  \bth_t -\bar{\bth}_t  \right)^2 \nonumber \\
& = M^2 \sigma_t^2\;.
\label{eq:x_var}
\end{align}
Similarly, an upper bound of the variance of the state after resampling  can be obtained as
\begin{equation}
E\left( \bx_{r,t} - \bar{\bx}_{r,t} \right)^2 \leq M^2\sigma_t^2\;.
\label{eq:x_r}
\end{equation}
Because the ensemble members before and after resampling are independent, the mean square difference between $\bth_t$ and $\bth_{r,t}$ can be computed as 
\begin{align}
E\left(  \bth_{r,t} - \bth_t  \right)^2 &  = E\left(  \bth_{r,t} -\bar{\bth}_t + \bar{\bth}_t - \bth_t  \right)^2 \nonumber \\
& = E\left( \bth_{r,t} -\bar{\bth}_t \right)^2 + E\left( \bth_{t} -\bar{\bth}_t \right)^2 \nonumber \\
& = 2 \sigma_t^2\;.
\label{eq:theta_diff}
\end{align}
Applying the mean value theorem again, the difference between the states before and after resampling is
\begin{align}
\bx_{r,t} - \bx_t  = \nabla \bfor\left( \bth_\zeta \right)\left( \bth_{r,t} - \bth_t \right)\;,
\end{align}
where $\bth_\zeta \in \left( \bth_{r,t}, \bth_t \right)$. Again assume $ \bth_{r,t} < \bth_t$ without loss of generality. Taking expectation of the above equation, we can obtain an upper bound for the expected mean square difference between $ \bx_{r,t}$ and  $\bx_t $
\begin{align}
E\left( \bx_{r,t} - \bx_t \right)^2 & = E\left( \nabla \bfor\left( \bth_\zeta \right)\left( \bth_{r,t} - \bth_t \right) \right)^2 \nonumber \\
& \leq M^2 E\left(  \bth_{r,t} - \bth_t  \right)^2 \nonumber \\
& \leq 2 M^2 \sigma_t^2\;.
\label{eq:x_diff}
\end{align}
Based on (\ref{eq:x_var}), (\ref{eq:x_r}), (\ref{eq:theta_diff}) and (\ref{eq:x_diff}) and applying the Cauchy-Schwarz inequality, the upper bounds for the differences of covariances before and after resampling are
\begin{align}
& \left\| C_{\bth_{r,t}, \bx_{r,t}} - C_{\bth_{t}, \bx_{t}} \right\| \nonumber \\
& = \left\| C_{\bth_{r,t}, \bx_{r,t}} - C_{\bth_{r,t}, \bx_{t}} + C_{\bth_{r,t}, \bx_{t}}  - C_{\bth_{t}, \bx_{t}} \right\| \nonumber \\
& \leq \left\|   C_{\bth_{r,t}, \bx_{r,t}} - C_{\bth_{r,t}, \bx_{t}}  \right\| + \left\| C_{\bth_{r,t}, \bx_{t}}  - C_{\bth_{t}, \bx_{t}}  \right\| \nonumber \\
& = \left\| E\left( \bth_{r,t} - \bar{\bth}_t\right)\left(   \bx_{r,t} - \bx_t   \right) \right\|
+ \left\| E\left( \bth_{r,t} - {\bth}_t\right)\left(    \bx_t - \bar{\bx}_t   \right) \right\| \nonumber \\
&\leq \sqrt{E\left( \bth_{r,t} - \bar{\bth}_t\right)^2 E\left(   \bx_{r,t} - \bx_t   \right)^2} 
+ \sqrt{E\left( \bth_{r,t} - {\bth}_t\right)^2 E\left(    \bx_t - \bar{\bx}_t   \right)^2}  \nonumber \\
& \leq 2\sqrt{2} M\sigma_t^2\;,
\label{eq:u_bound_Cthetax}
\end{align}
\begin{align}
&\left\| C_{\bx_{r,t}, \bx_{r,t}} - C_{\bx_{t}, \bx_{t}} \right\| \nonumber \\
& = \left\| C_{\bx_{r,t}, \bx_{r,t}} - C_{\bx_{r,t}, \bx_{t}} + C_{\bx_{r,t}, \bx_{t}}  - C_{\bx_{t}, \bx_{t}} \right\| \nonumber \\
& \leq \left\|   C_{\bx_{r,t}, \bx_{r,t}} - C_{\bx_{r,t}, \bx_{t}}  \right\| + \left\| C_{\bx_{r,t}, \bx_{t}}  - C_{\bx_{t}, \bx_{t}}  \right\| \nonumber \\
& = \left\| E\left( \bx_{r,t} - \bar{\bx}_{r,t}\right)\left(   \bx_{r,t} - \bx_t   \right) \right\|
+ \left\| E\left( \bx_{r,t} - {\bx}_t\right)\left(    \bx_t - \bar{\bx}_t   \right) \right\| \nonumber \\
&\leq \sqrt{E\left( \bx_{r,t} - \bar{\bx}_{r,t}\right)^2 E\left(   \bx_{r,t} - \bx_t   \right)^2} 
+ \sqrt{E\left( \bx_{r,t} - {\bx}_t\right)^2 E\left(    \bx_t - \bar{\bx}_t   \right)^2}  \nonumber \\
& \leq 2\sqrt{2} M^2\sigma_t^2\;.
\label{eq:u_bound_Cxx}
\end{align}
\end{proof}

The above theorem establishes an upper bound for $\left\| C_{\bth_{r,t}, \bx_{r,t}} - C_{\bth_{t}, \bx_{t}} \right\|$ and $\left\| C_{\bx_{r,t}, \bx_{r,t}} - C_{\bx_{t}, \bx_{t}} \right\|$. However, for the IREnKF to be consistent with the IEnKF, we should demonstrate these differences converge to zero as $t \rightarrow \infty$. This will be established below by studying the behavior of $\sigma_t$. Moreover, to prevent early stopping of the IEnKF, we should demonstrate that $C_{\bth_{r,t}, Hx_{r,t}}$ does not converge to zero before the innovation is minimized. We next consider these two main requirements. 

We first note that the process of resampling can be equivalently viewed as adding a random deviation $\Delta_{r,t}^{(j)}$ to the $j$th posterior ensemble member before assigning it as a prior ensemble member for the next Kalman update
\begin{align}
\hat{\bth}_{t+1}^{(j)} & = \bth_{r,t}^{(j)} = \bth_t^{(j)} + \Delta_{r,t}^{(j)} 
 =\hat{\bth}_{t}^{(j)} + K_t\left( \by_t^{(j)} - H\hat{\bx}_t^{(j)} \right) 
+ \Delta_{r,t}^{(j)}\; ,
\label{eq:resample}
\end{align}
where $\Delta_{r,t}^{(j)} \doteq \bth_{r,t}^{(j)} -\bth_{t}^{(j)}$. Note that $E(\Delta_{r,t})=0$ and $Var\left( \Delta_{r,t} \right) = Var\left( \bth_{r,t} \right) + Var\left(\bth_{t} \right) = 2 \sigma_t^2$. 

Analogous with Eq.~\eqref{eq:z_update}, with resampling implemented, an evolution equation for the extended state $\bz$ can be derived as
\begin{equation}
\hat{\bz}_{t+1}^{(j)} = A_t \hat{\bz}_t^{(j)} + A_t \Delta_{\by,t}^{(j)} + \tilde{\Lambda}_{t}^{(j)}\;,\label{eq:z_update_resample}
\end{equation}
except the additional term $\tilde{\Lambda}_{t}^{(j)}$ now takes the form
\begin{equation}
\tilde{\Lambda}_{t}^{(j)} \doteq
\begin{bmatrix}
   \Delta_{r,t}^{(j)}    \\
   \tilde{\Delta}_t^{(j)}
\end{bmatrix}
,~~
\tilde{\Delta}_t^{(j)} : = \Delta_{t}^{(j)} + H \nabla \bfor\left(\bar{\hat{\bth}}_{t+1} \right) \Delta_{r,t}^{(j)} \;,
\end{equation}
which includes both the effects from the nonlinearity, $\Delta_{t}^{(j)}$,  and resampling, $\Delta_{r,t}^{(j)}$. 
And similar to Eq.~\eqref{eq:cov_update_matrix}, it can be shown that now with resampling,
\begin{align} \label{eq:resample_cov_update}
\begin{bmatrix}
C_{\hat{\bth}_{t+1},\hat{\bth}_{t+1}} & C_{\hat{\bth}_{t+1},H\hat{\bx}_{t+1}} \\
C_{H\hat{\bx}_{t+1},\hat{\bth}_{t+1}} & C_{H\hat{\bx}_{t+1},H\hat{\bx}_{t+1}} \\
\end{bmatrix} 
&=
A_t
\begin{bmatrix}
C_{\hat{\bth}_{t},\hat{\bth}_{t}} & C_{\hat{\bth}_{t},H\hat{\bx}_{t}} \\
C_{H\hat{\bx}_{t},\hat{\bth}_{t}} & C_{H\hat{\bx}_{t},H\hat{\bx}_{t}} + \Gamma \\
\end{bmatrix}
A_t^T \nonumber \\
&
+ A_t 
\begin{bmatrix}
C_{\hat{\bth}_{t},{\Delta}_{r,t}} & C_{\hat{\bth}_{t},\tilde{\Delta}_{t}} \\
C_{H\hat{\bx}_{t}, {\Delta}_{r,t}} & C_{H\hat{\bx}_{t}, \tilde{\Delta}_{t}} \\
\end{bmatrix}
+ 
\begin{bmatrix}
C_{{\Delta}_{r,t},\hat{\bth}_{t}} & C_{{\Delta}_{r,t},H\hat{\bx}_{t}} \\
C_{\tilde{\Delta}_{t}, \hat{\bth}_t}  & C_{\tilde{\Delta}_{t}, H\hat{\bx}_{t}} \\
\end{bmatrix}
A_t^T  \nonumber \\
& + 
\begin{bmatrix}
C_{{\Delta}_{r,t}, {\Delta}_{r,t}} & C_{{\Delta}_{r,t}, \tilde{\Delta}_{t}} \\
C_{\tilde{\Delta}_{t}, {\Delta}_{r,t}} & C_{\tilde{\Delta}_{t}, \tilde{\Delta}_{t}}
\end{bmatrix}\;.
\end{align}
In contrast to Eq.~\eqref{eq:cov_update_matrix}, due to the deviation caused by resampling, the fourth term has non-zero contributions to all four entries of the covariance matrix. Therefore, $C_{\hat{\bth}, \hat{\bth}}$, $C_{\hat{\bth}, H\hat{\bx}}$ or $C_{H\hat{\bx}, H\hat{\bx}}$ will not converge to zero unless the nonlinear effect $\Delta_t$ and the resampling deviation $\Delta_{r,t}$ both approach zero. In turn, this prevents the Kalman operator $K_t$ from approaching to zero  before the innovation $\bar{\by} - H\hat{\bx}$ approaches to zero.

We next consider the asymptotic behavior of the covariance matrices. Similar to Eq.~\ref{eq:c-theta-theta-update}, a covariance update equation for $\bth$ can be obtained as 
\begin{align}
C_{\hat{\bth}_{r, t+1}, \hat{\bth}_{r, t+1}}
 = C_{\hat{\bth}_{r,t}, \hat{\bth}_{r,t}} - C_{\hat{\bth}_{r,t}, H\hat{\bx}_{r,t}} \left( C_{H\hat{\bx}_{r,t}, H\hat{\bx}_{r,t}} + \Gamma \right)^{-1}C_{H \hat{\bx}_{r,t}, \hat{\bth}_{r,t}}\;,
\end{align}
For the case where $\bth$ and $H\bx$ are scalar, the above relation reduces to
\begin{align}\label{eq:scalar_sigma_evolve}
\sigma_{t+1}^2 = \sigma_{t}^2 - \frac{C_{\hat{\bth}_{r,t}, H\hat{\bx}_{r,t}}C_{H \hat{\bx}_{r,t}, \hat{\bth}_{r,t}}}{C_{H\hat{\bx}_{r,t}, H\hat{\bx}_{r,t}} + \Gamma}\;.
\end{align}
Because $\sigma^2$ is always non-negative and the second term on the right is non-negative, from the monotone convergence theorem, it can be shown that the sequence $\left\{ \sigma_t \right\}_{t=1}^\infty$ is convergent, and we will denote the limit as $\sigma_*$. 
Also, since $\left\{ \sigma_t \right\}_{t=1}^\infty$ is convergent, the following equation also holds true:
\begin{equation}
\lim_{t\rightarrow \infty}\left| C_{\hat{\bth}_{r,t}, H\hat{\bx}_{r,t}} \right| = 0\;.
\end{equation}
There are two reasons for the above covariance $C_{\hat{\bth}_{r,t}, H\hat{\bx}_{r,t}} $ to converge to zero:
\begin{itemize}
\item The ensembles $\left\{ \hat{\bth}_{t}^{(j)} \right\}_{j=1}^J$ and $\left\{ H\hat{\bx}_{t}^{(j)} \right\}_{j=1}^J$ becomes uncorrelated while the ensembles do not collapse to a single point.
\item The ensemble $\left\{ \hat{\bth}_{t}^{(j)} \right\}_{j=1}^J$ collapses to a single point (thus $\left\{ H\hat{\bx}_{t}^{(j)} \right\}_{j=1}^J$ also collapses to a single point), and therefore $C_{\hat{\bth}_{r,t}, H\hat{\bx}_{r,t}} $ converges to zero. 
\end{itemize}
The first case corresponds to the false convergence of IEnKF and the second case corresponds to the true convergence~\cite{schillings2017convergence} (note that this only denotes the convergence of IEnKF and does not guarantee convergence to the true global minimum in a non-convex setting). Both cases are possible in numerical simulations of IEnKF. 
However, because of the additional random disturbance added by resampling $\Delta_{r,t}$, the covariance $C_{\hat{\bth}_{r,t}, H\hat{\bx}_{r,t}}$ will not be equal to zero unless $\Delta_{r,t} =0$ (i.e.\ $\sigma_t = 0$). 
Mathematically, this can be formulated as the following conjecture.
\begin{conjecture}
There exists a non-negative function $g(\sigma_t)\geq 0$ such that
\begin{align}
\left| C_{\hat{\bth}_{r,t}, H\hat{\bx}_{r,t}} \right| \geq g\left(\sigma_t \right) >0,~\forall \sigma_t \neq 0, 
\end{align}
and $g\left(\sigma_t \right) = 0$ only when $\sigma_t = 0$. 
\end{conjecture}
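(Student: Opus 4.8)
\section*{Proof proposal for the conjecture}

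The plan is to represent the covariance exactly through a Hoeffding-type identity and then bound it below using an observability hypothesis on the forward map. I work in the scalar setting of Eq.~\eqref{eq:scalar_sigma_evolve}, in which $\sigma_t$ and the quantities appearing in the conjecture are scalars, and I write $g(\bth)\doteq H\bfor(\bth)$ for the composed map. Treating $\bth_{r,t}$ as a real random variable with variance $\sigma_t^2$, the classical covariance identity gives
\[
C_{\hat{\bth}_{r,t}, H\hat{\bx}_{r,t}} = \mathrm{Cov}\!\left(\bth_{r,t}, g(\bth_{r,t})\right) = \int_{-\infty}^{\infty} g'(s)\, w_t(s)\, ds,
\]
where $w_t(s)\geq 0$ is a non-negative weight built from the cumulative distribution function of the resampling law and satisfies $\int_{-\infty}^{\infty} w_t(s)\,ds = \sigma_t^2$. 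Since $g'(s) = H\nabla\bfor(s)$, this reduces the conjecture to showing that the gradient of the forward map, integrated against the non-negative weight $w_t$, cannot vanish unless $\sigma_t = 0$.

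First I would impose an \emph{observability} condition, namely a uniform strictly positive lower bound $H\nabla\bfor(s)\geq m>0$ on the relevant domain, i.e.\ the forward map is monotone in the observed direction. Under this hypothesis the representation above yields immediately
\[
\left| C_{\hat{\bth}_{r,t}, H\hat{\bx}_{r,t}} \right| = \int_{-\infty}^{\infty} g'(s)\, w_t(s)\, ds \;\geq\; m \int_{-\infty}^{\infty} w_t(s)\, ds \;=\; m\,\sigma_t^2,
\]
so the conjecture holds uniformly in $t$ with the explicit choice $g(\sigma_t)=m\,\sigma_t^2$, which is non-negative, vanishes only at $\sigma_t=0$, and is consistent with the matching upper bound of order $\sigma_t^2$ implied by $\|\nabla\bfor\|\leq M$ from Thm.~\ref{Theorem:upper_bound}. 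This also recovers, to leading order, the linearized relation $C_{\hat{\bth}_t, H\hat{\bx}_t}=\sigma_t^2\,H\nabla\bfor$ used in deriving Eq.~\eqref{eq:stationary}.

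The hard part is discharging the monotonicity hypothesis, and I expect this to be the genuine obstacle that keeps the statement a conjecture rather than a theorem. When $g'=H\nabla\bfor$ changes sign over the support of $\bth_{r,t}$, the integral $\int g'\,w_t$ can cancel to zero even though $\sigma_t\neq 0$; in the extreme case where $g$ is even about the ensemble mean the covariance vanishes identically by symmetry. For the Gaussian-bump forward model of Eq.~\eqref{eq:F_example} the composed map $H\bfor$ is genuinely non-monotone, so no global lower bound of the form $m\sigma_t^2$ can hold for every configuration. To salvage a uniform $g(\sigma_t)$ I would exploit the \emph{shape} of the resampling law rather than only its variance: a heavy-tailed, high-kurtosis resampling distribution shifts the mass of $w_t$ away from the mean, toward regions where $H\nabla\bfor$ is bounded away from zero, suppressing the cancellation. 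Making this precise amounts to finding the weakest joint condition on $\bfor$ and on the higher moments of the resampling law under which $\int g'\,w_t$ stays bounded below by a positive function of $\sigma_t$; this is exactly the interplay examined in \S\ref{sec:kurtosis}, and I believe it is the right avenue toward either proving the conjecture under mild structural assumptions or exhibiting a counterexample in their absence.
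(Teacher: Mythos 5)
You should first be aware that the paper contains no proof of this statement: it is deliberately labeled a \emph{conjecture}, is supported only by the heuristic remark that the resampling perturbation $\Delta_{r,t}$ should keep $C_{\hat{\bth}_{r,t},H\hat{\bx}_{r,t}}$ from vanishing exactly unless $\sigma_t=0$, and is then \emph{assumed} in order to derive $\sigma_t\to 0$ in Eq.~\eqref{eq:sigam_theta_converge_zero}. So there is no paper argument for your attempt to match or diverge from; the only question is whether you settle the open statement, and you do not --- your own closing paragraph honestly concedes this. What you do have is a correct conditional result: the Hoeffding representation is valid (with $w_t(s)=E\left[\left(\bth_{r,t}-\bar{\bth}_t\right)\mathbf{1}_{\{\bth_{r,t}>s\}}\right]\geq 0$ and $\int w_t = \sigma_t^2$), and under the monotonicity hypothesis $H\nabla\bfor\geq m>0$ it yields $\left|C_{\hat{\bth}_{r,t},H\hat{\bx}_{r,t}}\right|\geq m\sigma_t^2$, so $g(\sigma)=m\sigma^2$ works. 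A small wrinkle you should fix: the paper's covariances are finite-$J$ sample covariances, not population covariances of the resampling law, so the Hoeffding identity does not literally apply; but in the monotone case the same bound follows even more elementarily from the pairwise identity $C_{\bth,g(\bth)}=\frac{1}{2J^2}\sum_{i,j}\left(\theta_i-\theta_j\right)\left(g(\theta_i)-g(\theta_j)\right)$ together with the mean value theorem, so the conditional theorem survives.

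The genuine gap is the one you name --- non-monotonicity --- but it is worse than you suggest, and your proposed rescue via higher moments cannot work for the statement as written. The conjectured lower bound $g(\sigma_t)$ depends on $\sigma_t$ \emph{alone}, hence must hold uniformly over the location of the ensemble mean. Fix $\sigma_t>0$ and a fixed-shape resampling law, and slide $\bar{\bth}_t$ across a peak of $H\bfor$ (which exists for the model of Eq.~\eqref{eq:F_example}): to leading order $C_{\hat{\bth}_{r,t},H\hat{\bx}_{r,t}}\approx \sigma_t^2\,H\nabla\bfor\left(\bar{\bth}_t\right)$ changes sign, so by continuity of the population covariance in $\bar{\bth}_t$ it is \emph{exactly} zero at some mean location with $\sigma_t\neq 0$, contradicting $\left|C\right|\geq g(\sigma_t)>0$. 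No choice of kurtosis or tail weight removes this sign change; heavy tails can shrink the cancellation set but not eliminate the zero crossing. Consequently the only true versions of the conjecture are either probabilistic (for the random finite ensemble, $C_{\hat{\bth}_{r,t},H\hat{\bx}_{r,t}}\neq 0$ almost surely when $\sigma_t\neq 0$, with no deterministic uniform floor) or restricted to configurations excluding neighborhoods of critical points of $H\bfor$ --- in which case your monotone argument essentially applies locally. Your attempt is thus a correct partial result plus a correct diagnosis of the obstruction, but it leaves the statement open, exactly as the paper does.
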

Based on the above conjecture, Eq.\ (\ref{eq:scalar_sigma_evolve}) can be converted to 
\begin{align}
\sigma_{t+1}^2 \leq \sigma_t^2 - cg^2\left( \sigma_t \right)\;,
\end{align}
where $c>0$ is a constant that accounts for the denominator of the second term on the right side of Eq.~\eqref{eq:scalar_sigma_evolve}. Taking the limit $t \rightarrow \infty$ on both sides of the above inequality yields
\begin{align}
\sigma_*^2 \leq \sigma_*^2 - cg^2(\sigma_*) \Rightarrow g^2\left( \sigma_* \right) \leq 0 \Rightarrow g\left( \sigma_* \right) = 0\;.
\end{align}
Because zero is the only stationary point of $g(\cdot)$, it follows that $\sigma_* = 0$, i.e.\
\begin{equation}
\lim_{t\rightarrow \infty} \sigma_t^2 = 0\;.
\label{eq:sigam_theta_converge_zero}
\end{equation}
Therefore, from Eq.~\eqref{eq:u_bound_Cthetax}, as the iteration step $t$ increases, the difference between $C_{\bth_{r,t}, \bx_{r,t}}$ and $C_{\bth_{t}, \bx_{t}}$ will asymptotically converge to zero. This result also holds for the difference between $C_{\bx_{r,t}, \bx_{r,t}}$ and $C_{\bx_{t}, \bx_{t}} $.
Therefore, except at the early stage of the iteration process (where resampling prevents $C_{\hat{\bth}, H\hat{\bx}}$ from converging to zero), the change of the Kalman gains will be small and asymptotically converge to zero if the mean and the covariance of the parameter ensemble are kept the same before and after the resampling process.

\subsection{Resampling applied to example} 
To verify the conclusions above, proposed IREnKF is applied to the example problem defined in Eq.~\eqref{eq:F_example} and \eqref{eq:H_example}. 
A Gaussian distribution was used as the resampling distribution. 
Figure \ref{fig:evolution_cov_with_resampling} shows the evolution of the covariances with resampling implemented. 
Compared to the results without resampling, the covariance $C_{\hat{\bth}, H\hat{\bx}}$ and the Kalman gain $K_t$ do not approach to zero in the early stage to cause $C_{\hat{\bth}, \hat{\bth}}$ and $C_{H\hat{\bx}, H\hat{\bx}}$ to stop updating. 
Instead, all covariances converge to zero simultaneously and roughly at the same time that the magnitude of the innovation $\left\| \bar{\by} - H\hat{\bx} \right\|$ converges to zero (see  Fig.~\ref{fig:fig_misfit_with_resampling}). 
This indicates that the ``early stopping" of the standard IEnKF is prevented  by resampling as compared to the simulations without resampling where the Kalman gain $K_t$ converges to zero before the innovation is minimized (i.e. converges to zero). 

Figure \ref{fig:evolution_cov_with_resampling}(a) also shows that $C_{\hat{\bth},\hat{\bth}}$ converges to zero as $t$ increases. This verifies the conclusion from Eq.~\eqref{eq:sigam_theta_converge_zero}, and along with Thm~\ref{Theorem:upper_bound}, leads to the conclusion that the change of the Kalman gain after resampling will remain small and asymptotically converge to zero. This conclusion is numerically verified by results shown in Fig.~\ref{fig:fig_diff_K_t}, which demonstrates that the difference of the Kalman gain before and after resampling quickly decreases, remains bounded by a small quantity, and ultimately converges to zero as $t$ increases.

Figure \ref{fig:fig_theta_convergence_with_resampling} and Figure \ref{fig:fig_x_convergence_with_resampling} show the solution path with resampling implemented in parameter $\bth$ and state $x$ space, respectively. Both figures show that the ``early stopping" of IEnKF is prevented and the solutions are able to converge to the true solution. Note that there are multiple local minima for this test problem and the solution only converges to one of them depending on the initial condition given.

\begin{figure}[h]
  \centering
  \begin{tabular}{cc}
    \includegraphics[width=0.4\columnwidth]{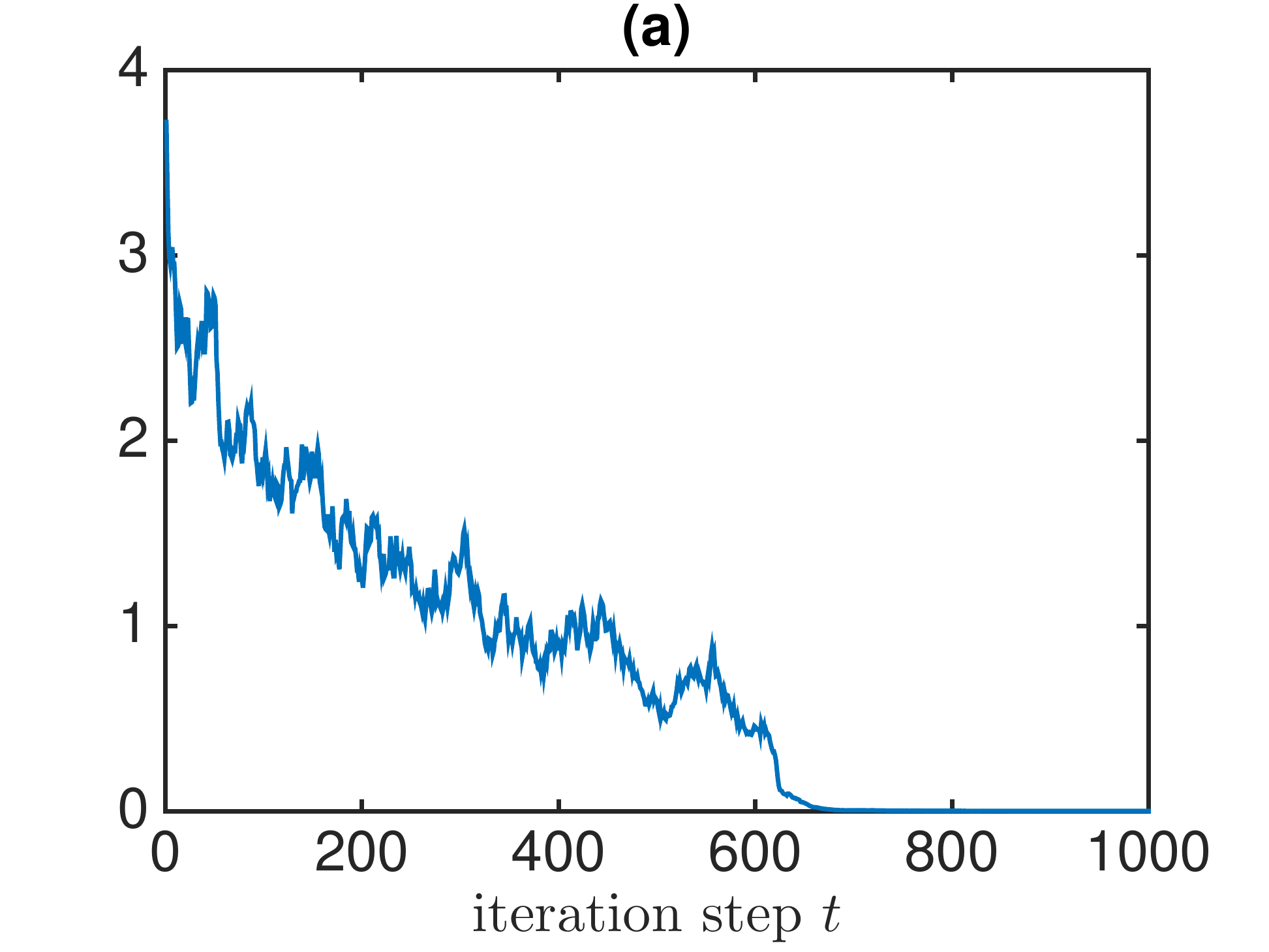}&
    
    \includegraphics[width=0.4\columnwidth]{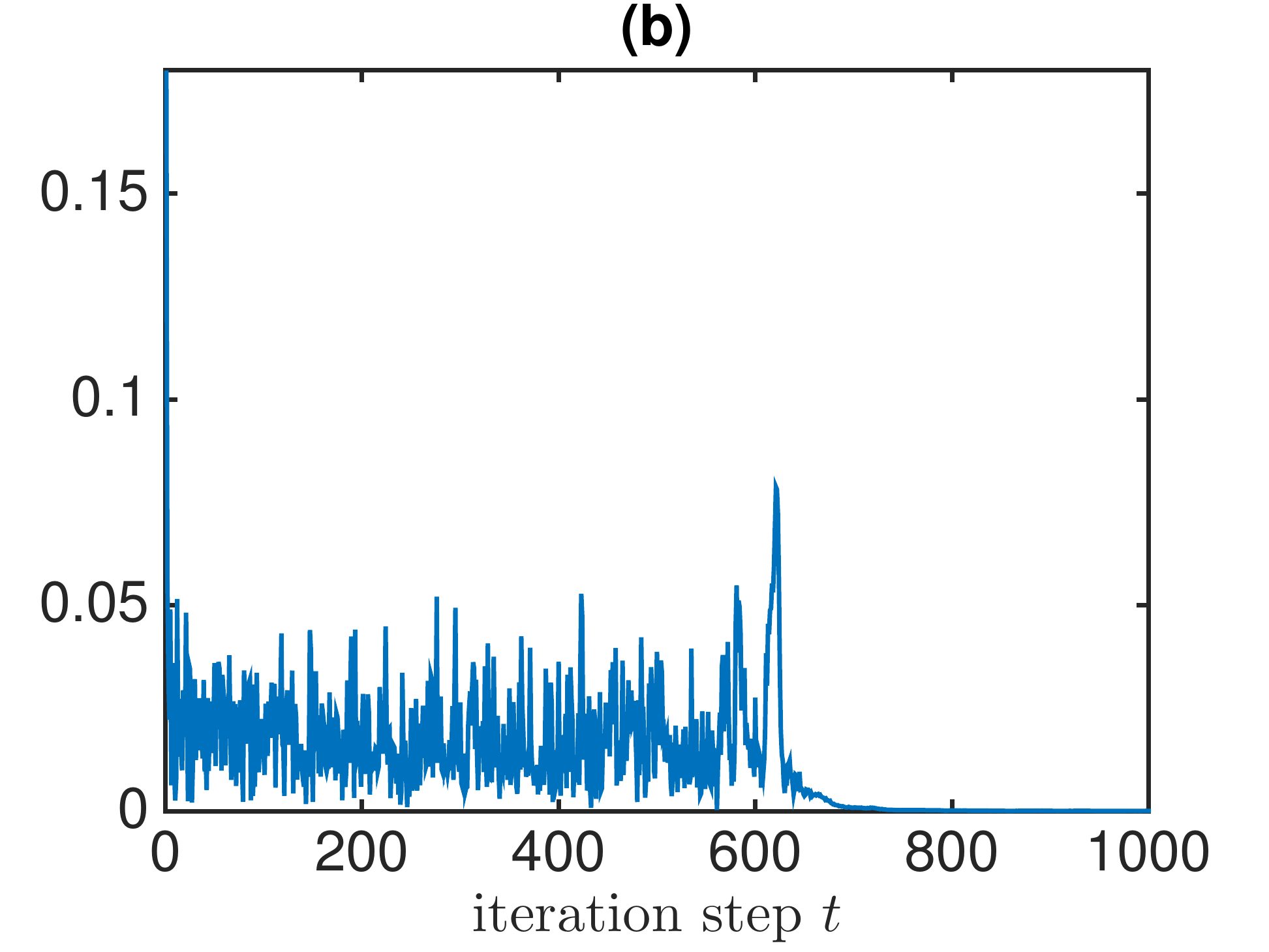}\\
    
    \includegraphics[width=0.4\columnwidth]{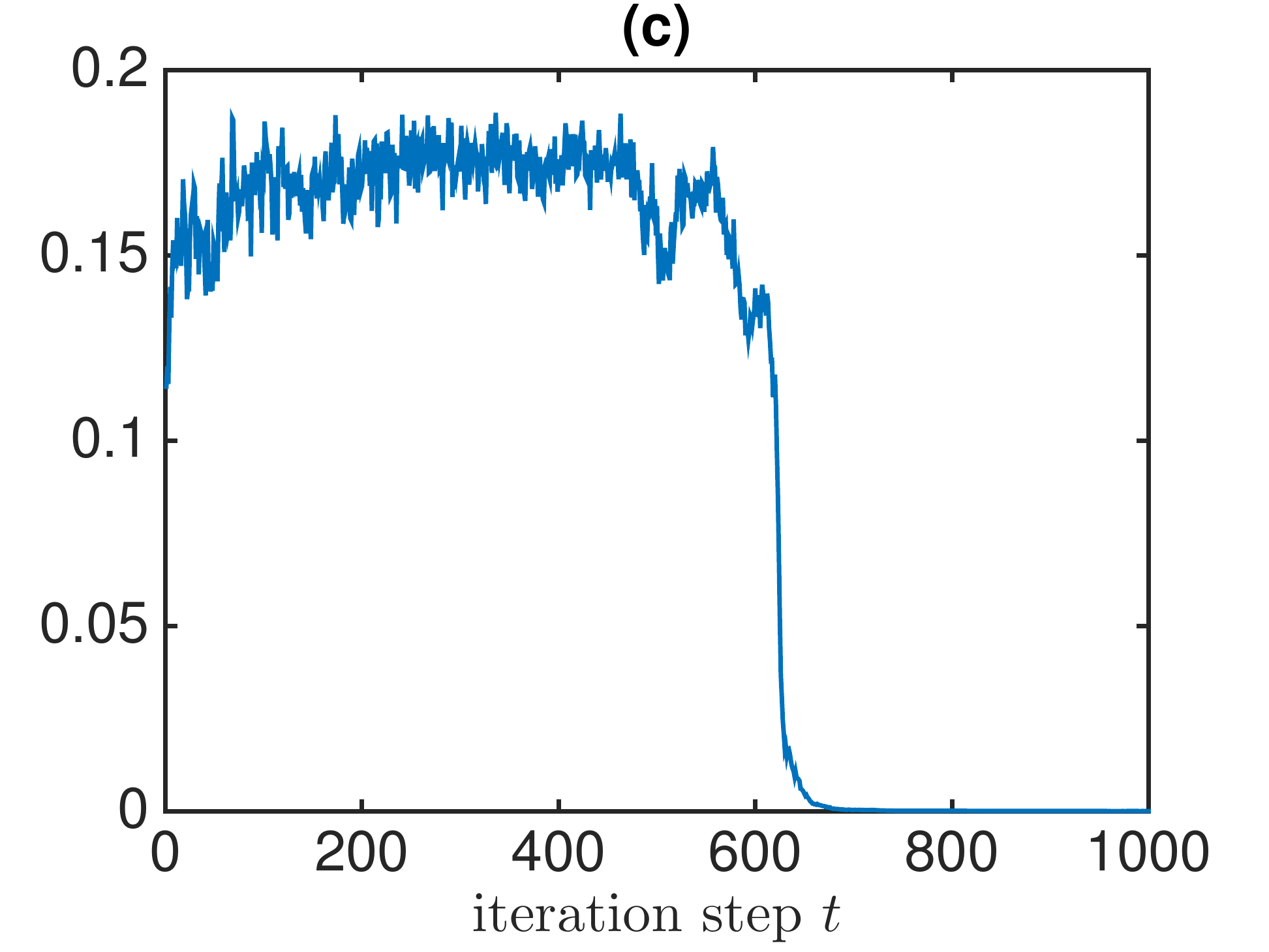}&
    
    \includegraphics[width=0.4\columnwidth]{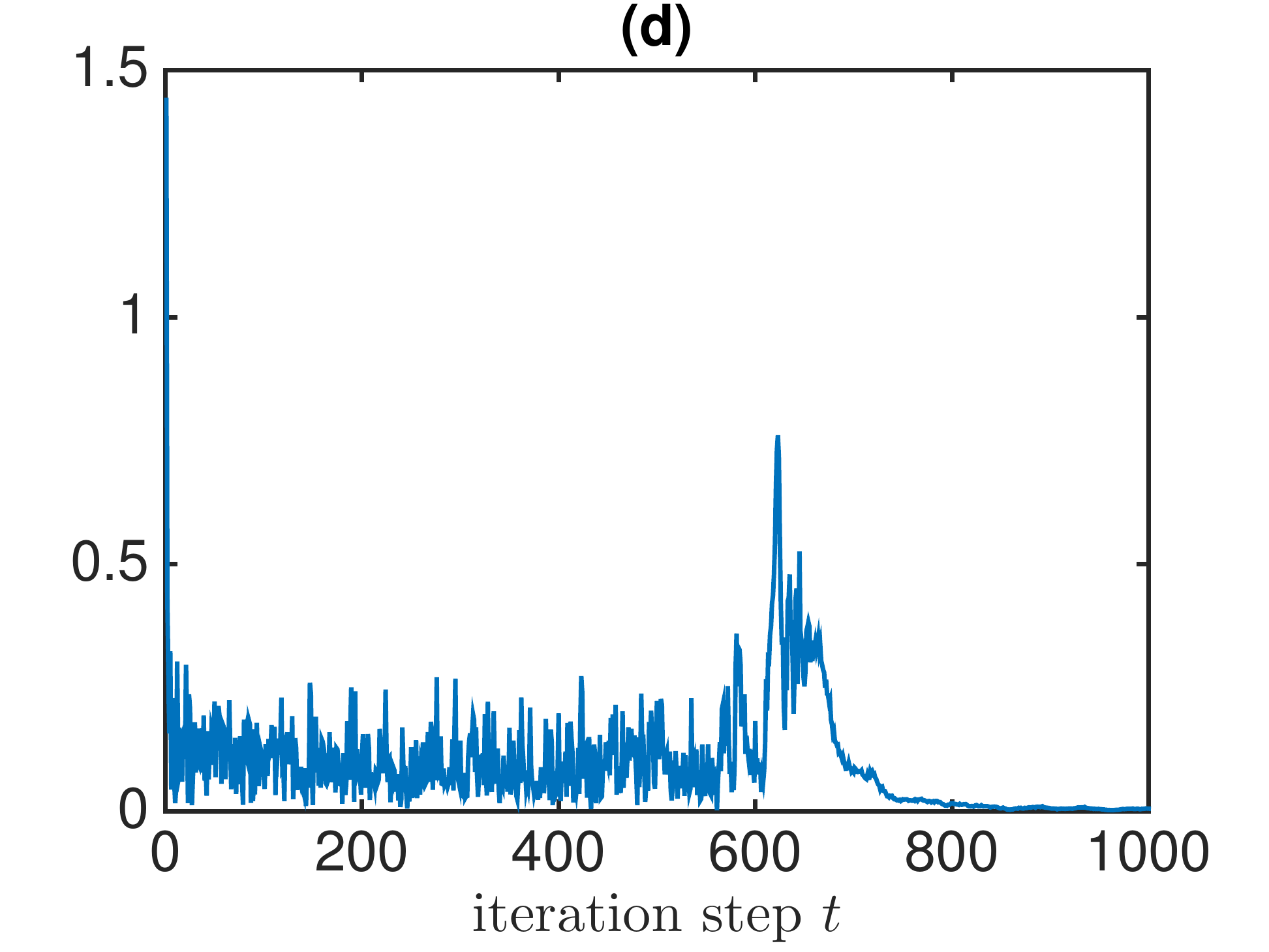}\\
  \end{tabular}
  \caption{The evolution of the norms of the covariances and the Kalman gain with resampling implemented.
  (a): $\left\|C_{\hat{\bth}, \hat{\bth}}\right\|$, (b): $\left\| C_{\hat{\bth}, H\hat{\bx}}\right\|$, (c): $\left\|C_{H\hat{\bx}, H\hat{\bx}}\right\|$, (d): $\left\| K_t\right\|$ }
\label{fig:evolution_cov_with_resampling}
\end{figure}

\begin{figure}[h]
\centering
\includegraphics[width=0.45\columnwidth]{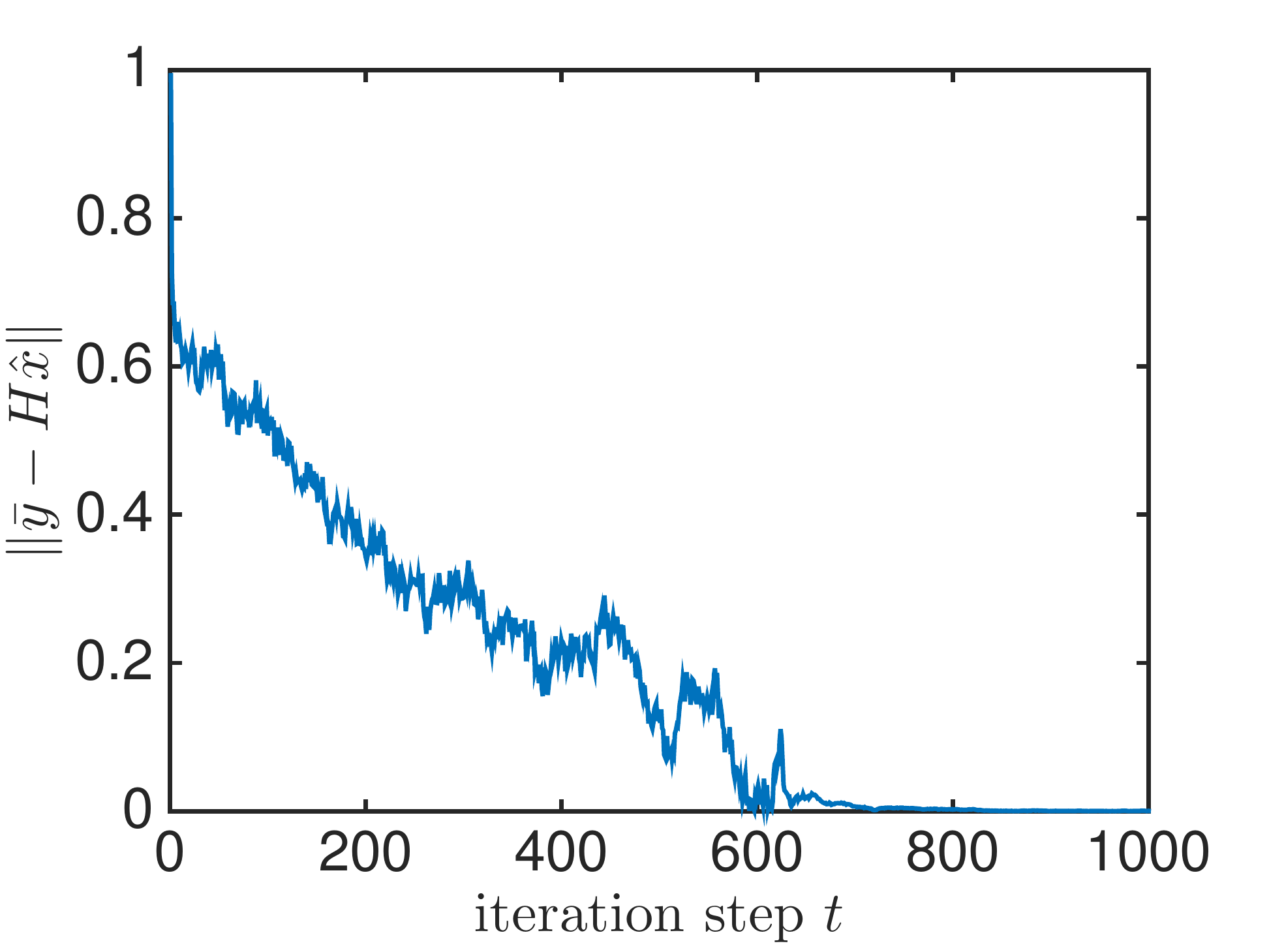}
\caption{The evolution of the magnitude of innovation $\left\| \bar{\by} -H\hat{\bx} \right\|$ with resampling implemented.}
\label{fig:fig_misfit_with_resampling}
\end{figure}

\begin{figure}[h]
\centering
\includegraphics[width=0.45\columnwidth]{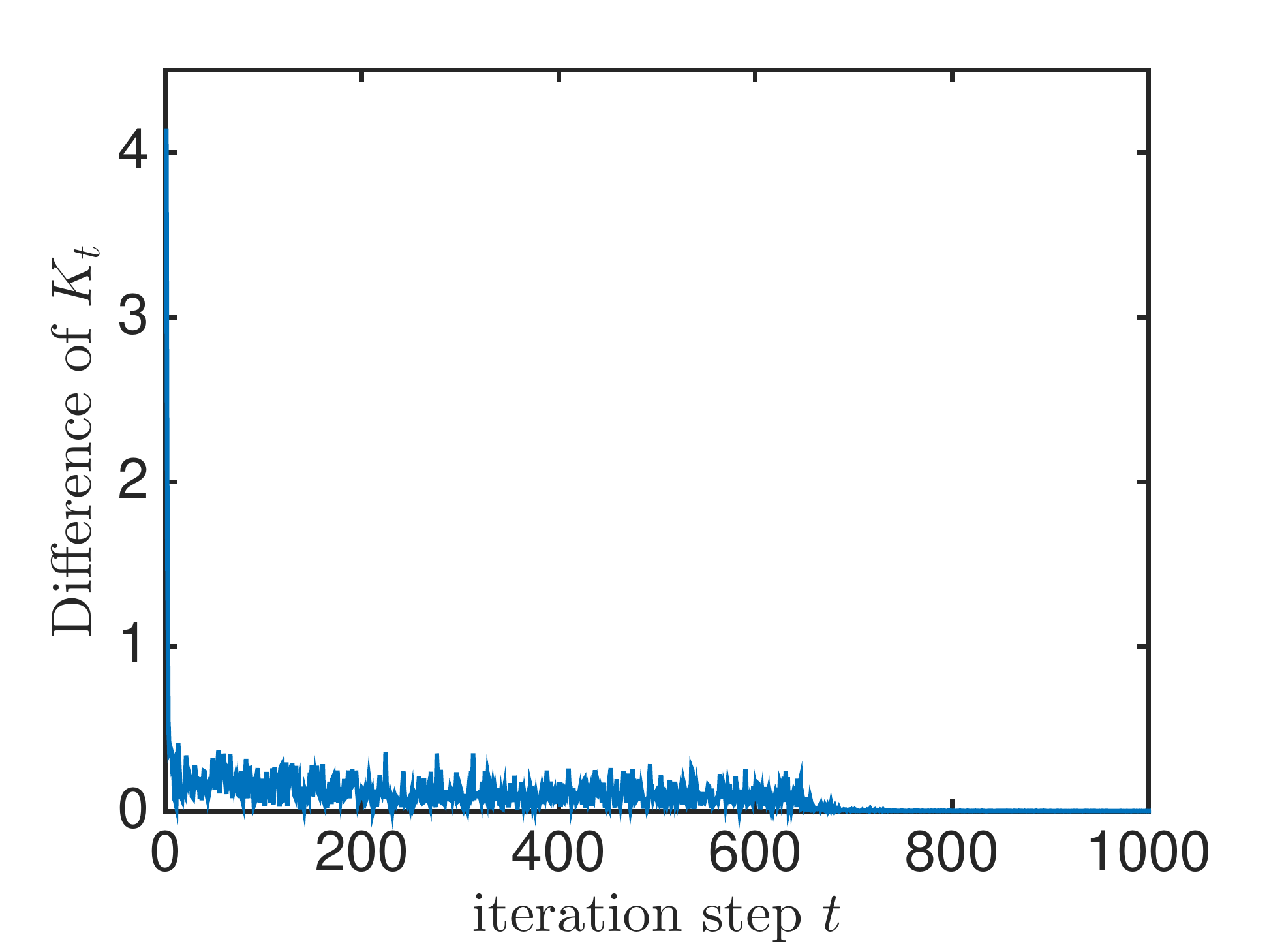}
\caption{The difference between the Kalman gains before and after resampling: $\left\| K_{t, before} - K_{t, after}  \right\|$.}
\label{fig:fig_diff_K_t}
\end{figure}

\begin{figure}[h]
\centering
\includegraphics[width=0.45\columnwidth]{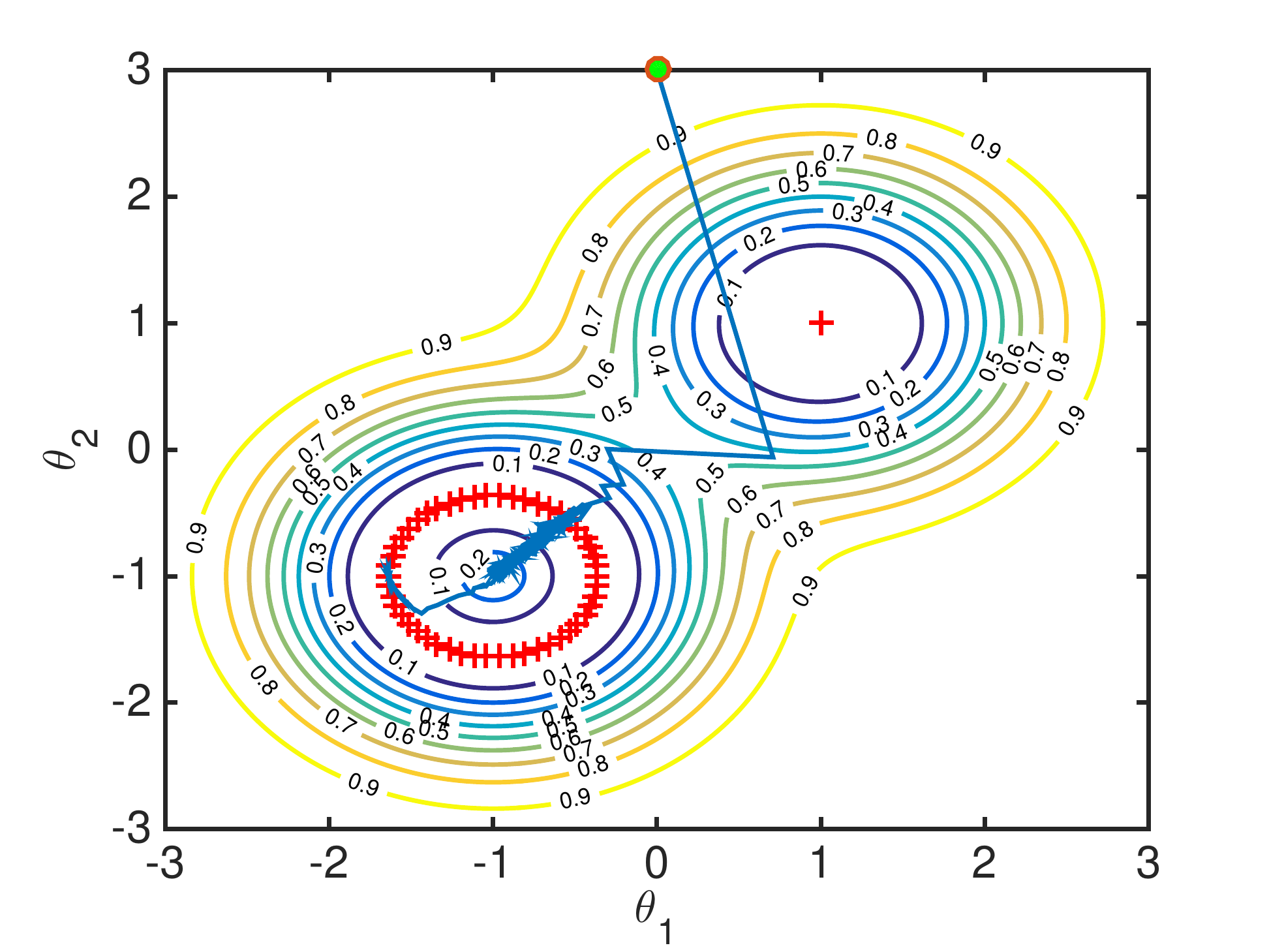}
\caption{The solution path of the ensemble mean of $\bth$ with resampling implemented. The red ``+" represent the $\bth$ values that minimize $\left\|\bar{\by} - H\bfor(\bth) \right\|^2$. The green circle denotes the starting point.}
\label{fig:fig_theta_convergence_with_resampling}
\end{figure}

\begin{figure}[h]
\centering
\includegraphics[width=0.45\columnwidth]{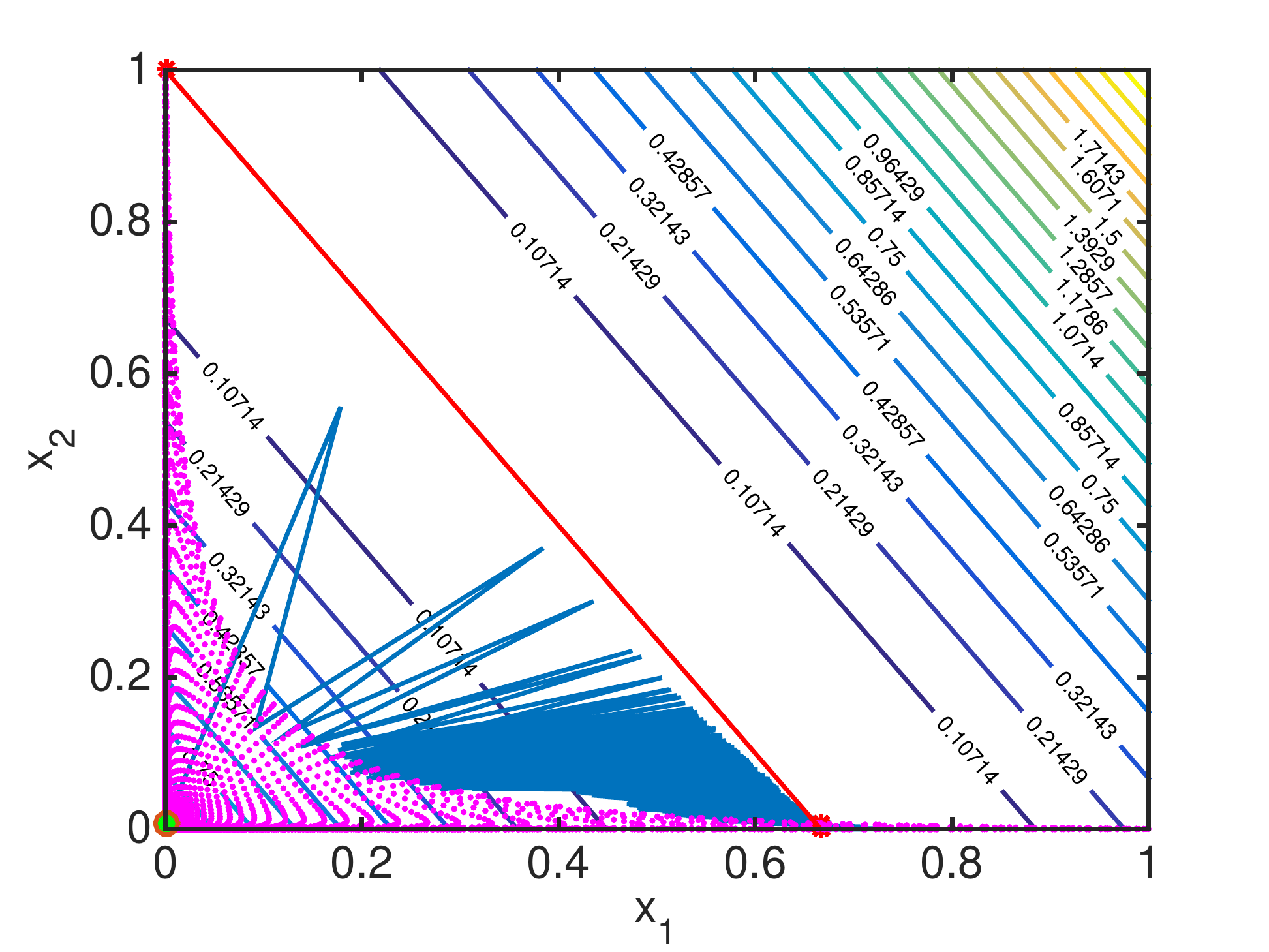}
\caption{The solution path of the ensemble mean of $x$ with resampling implemented. The red line in the middle represents the level sets of $\bar{\by} - Hx = 0$. The dotted region in the lower left corner represents the image of the forward mapping $\bfor(\cdot)$. The green circle denotes the starting point.}
\label{fig:fig_x_convergence_with_resampling}
\end{figure}

\subsection{Influence of Higher Order Moments on Convergence}\label{sec:kurtosis}
In the example application above, a Gaussian distribution was used during the resampling process, with the first and second order moments kept unchanged from the posterior distribution of the previous iteration step. However, other distributions with different higher order moments can also be used to resample the ensemble. It would be unusual to consider skewness (the third order moment) of the resampling distribution without prior knowledge, thus we mainly focus here on the influence of kurtosis (the fourth order moment) on the convergence of the proposed IREnKF. Namely, three different resampling distributions will be considered (see Fig.~\ref{fig:fig_kurtosis_different_distribution}): 
\begin{itemize}
\item Uniform distribution: kurtosis = 1.8;
\item Gaussian distribution: kurtosis = 3;
\item Laplace distribution: kurtosis = 6.
\end{itemize}
These distributions have zero skewness, and the mean and variance are set to be the same as the ensemble before resampling. The numerical simulation results for the above resampling distributions are shown in Fig.~\ref{fig:kurtosis_resampling_theta_convergence}, which yield two main conclusions:
\begin{itemize}
\item Larger kurtosis makes the convergence more oscillatory. In Fig.\ref{fig:kurtosis_resampling_theta_convergence}, the main trends of the convergence for uniform and gaussian resampling (smaller kurtosis) are almost monotonic while the convergence for Laplace resampling (larger kurtosis) is more oscillatory. This is because the distribution with larger kurtosis is more likely to generate outliers, which will be reflected by increased fluctuations in the ensemble mean parameter estimation. This is not necessarily a disadvantage, however, as this naturally adds mutation that may help the solution to avoid local minima and converge to the global minimum.
\item Smaller kurtosis accelerates the convergence rate. It can also be observed in Fig.~\ref{fig:kurtosis_resampling_theta_convergence} that Laplace resampling (with the largest kurtosis) yields the largest iteration number for convergence (about 1200 steps). Even comparing uniform resampling and Gaussian resampling, which yield almost the same convergence (about 600 steps), we can find that Gaussian resampling (with larger kurtosis) takes more iterations to converge to the neighborhood of local minimums around $(-1, -1)$ than uniform resampling. This is because resampling distribution with smaller kurtosis is less likely to generate outliers and most of the new ensembles will be near the ensemble mean, which is in general a better estimator of the parameter value than an individual ensemble member. 
\end{itemize} 

\begin{figure}[h]
\centering
    \includegraphics[width=0.45\columnwidth]{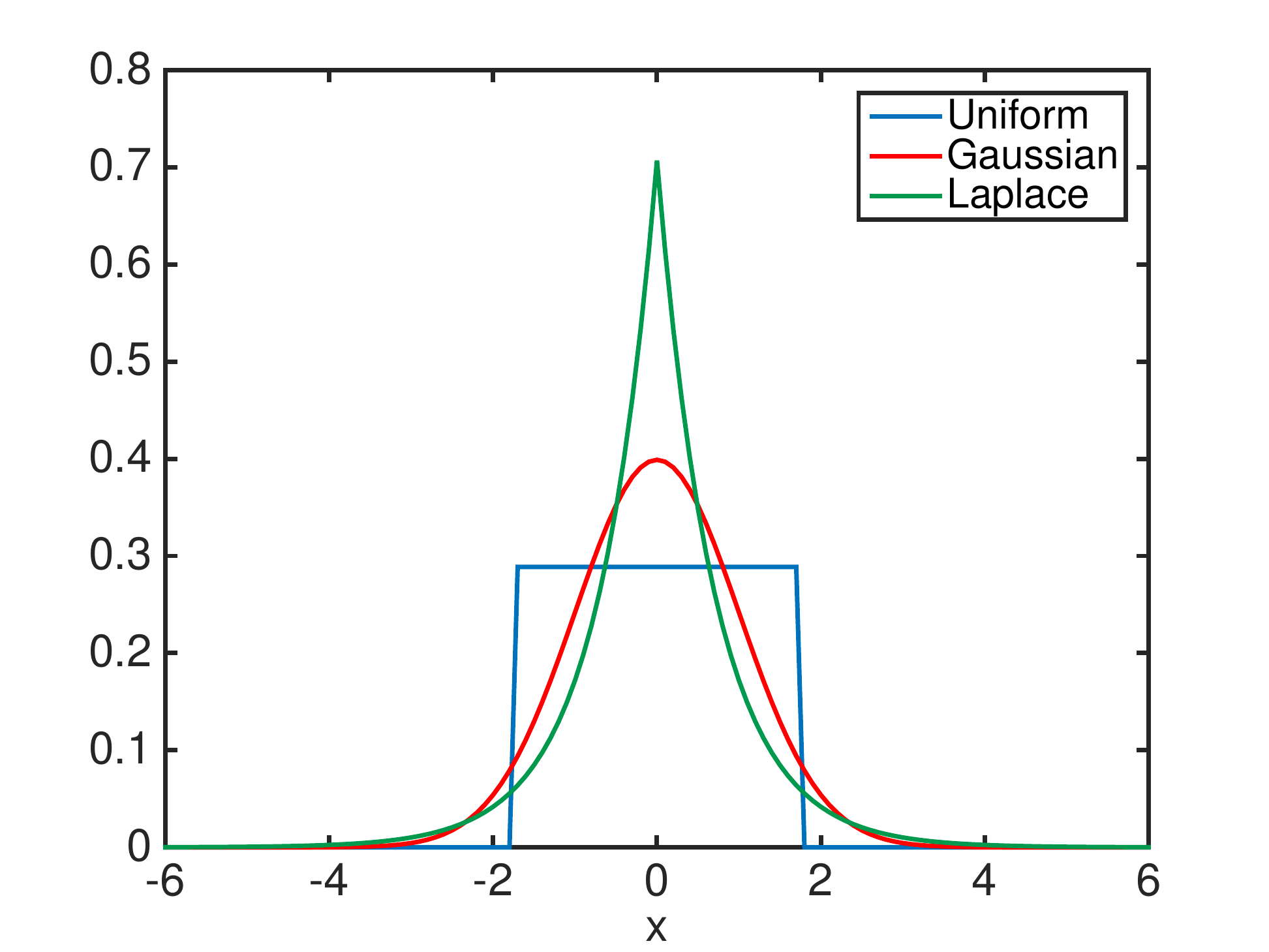}
    \caption{The probability density functions of the three distributions used for resampling. Uniform distribution: kurtosis = 1.8\;. Gaussian distribution: kurtosis = 3\;. Laplace distribution: kurtosis = 6\;.}
    \label{fig:fig_kurtosis_different_distribution}
\end{figure}

\begin{figure}[h]
  \centering
  \begin{tabular}{ccc}
    \includegraphics[width=0.33\columnwidth]{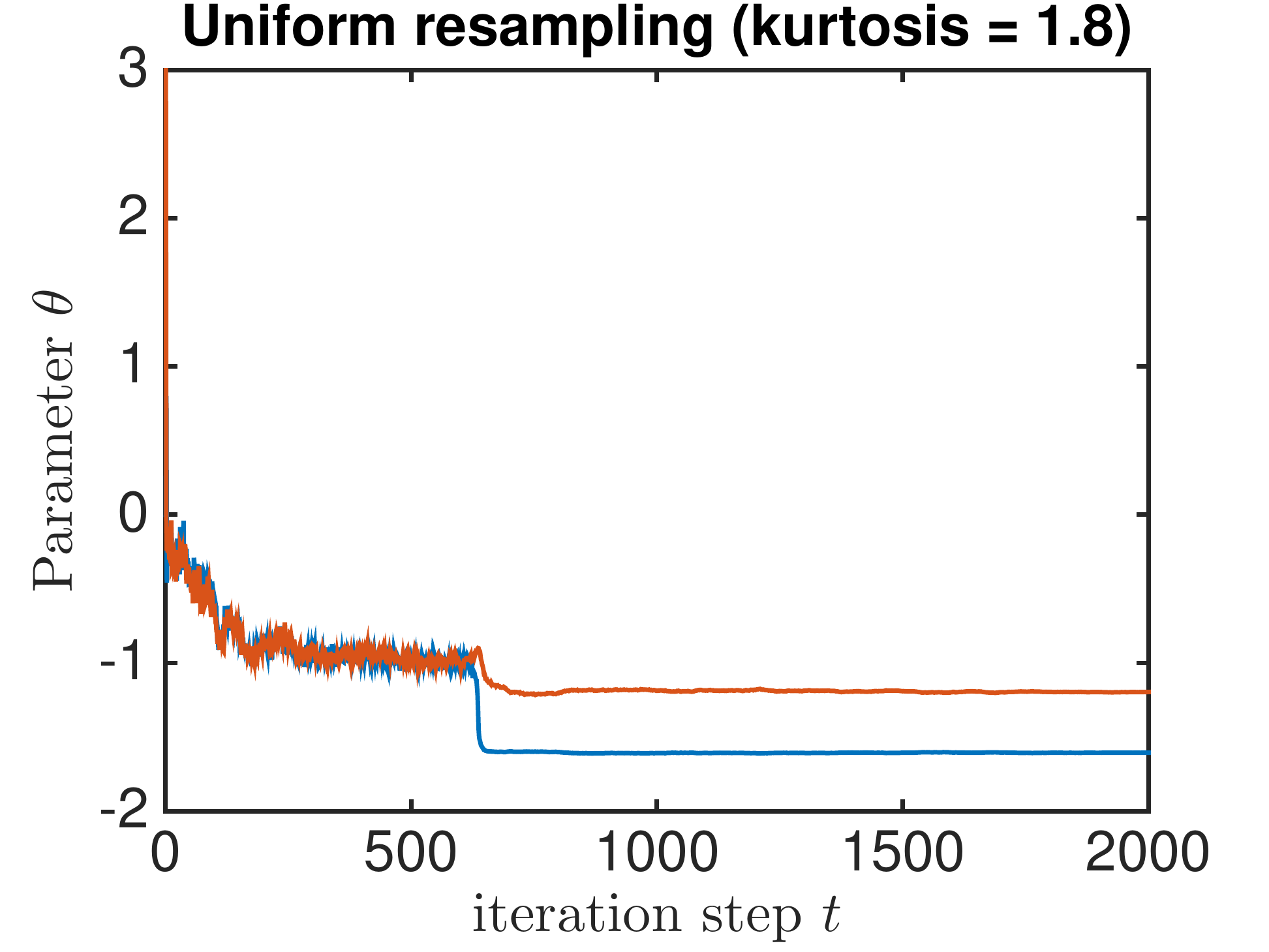}&
    
    \includegraphics[width=0.33\columnwidth]{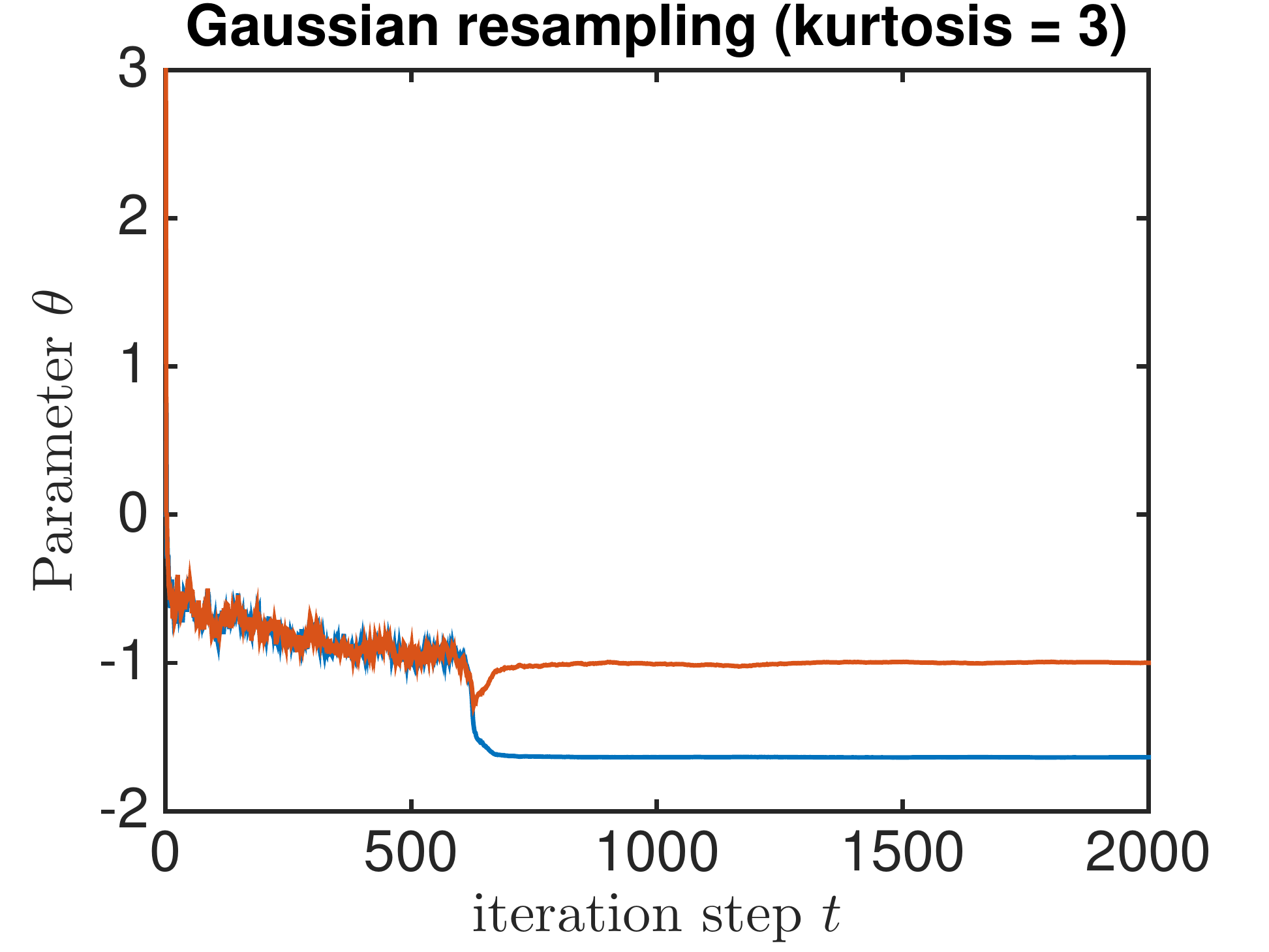}&
    
    \includegraphics[width=0.33\columnwidth]{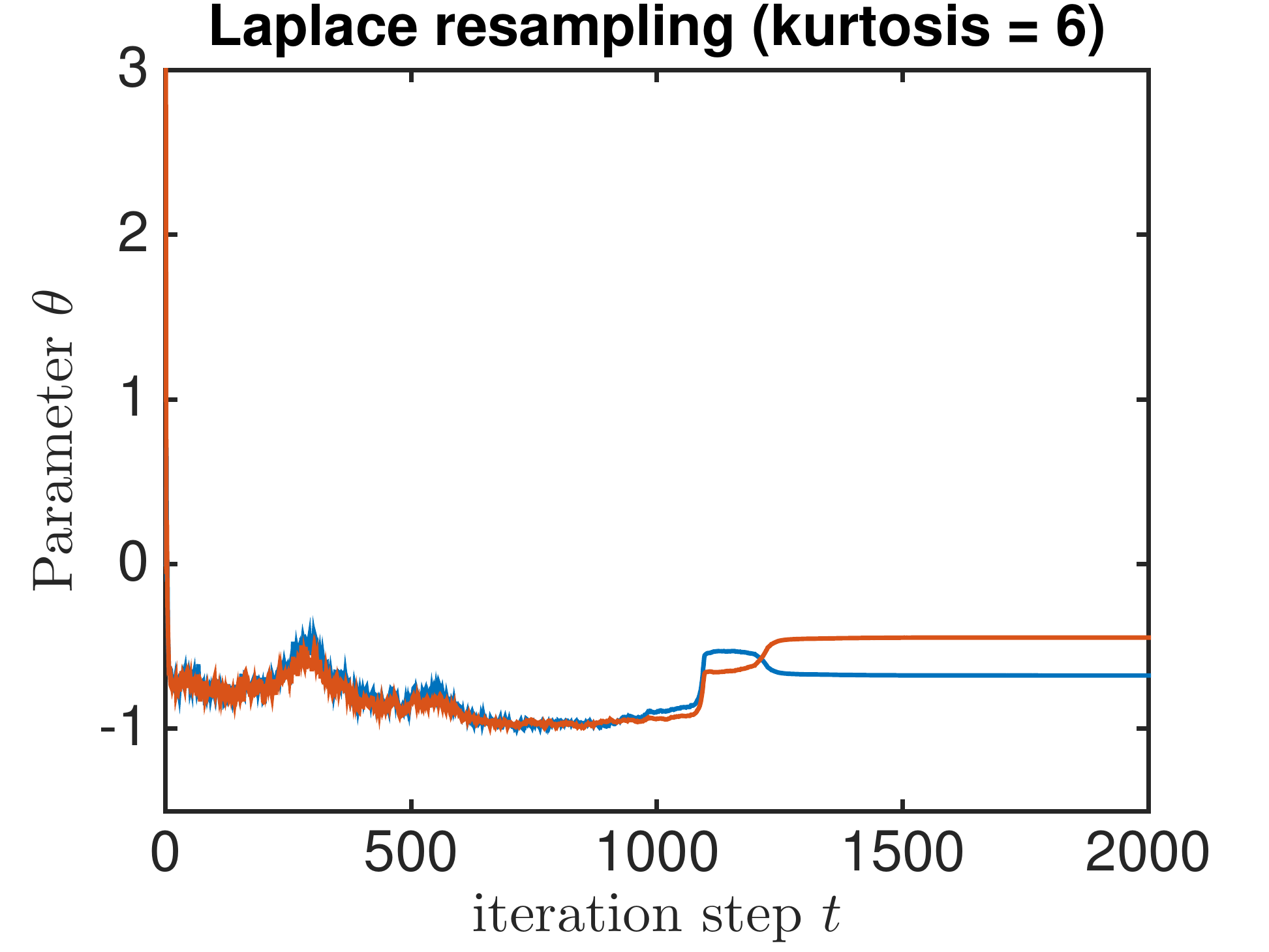}\\
  \end{tabular}
  \caption{The convergence of parameter $\bth$ estimation with different resampling distributions implemented. (Left) Uniform distribution. (Middle) Gaussian distribution. (Right) Laplace distribution.}
\label{fig:kurtosis_resampling_theta_convergence}
\end{figure}

\section{Conclusion}
Herein a comprehensive study of the convergence of iterative ensemble Kalman filter is conducted and a potential method to improve its convergence was proposed. We first showed the necessity of iterations for ensemble Kalman method applied to nonlinear inverse problems by reformulating it as a constrained optimization minimizing the innovation $\left\|\by - H \bx \right\|$ while satisfying the forward model equation $\bx = \mathbf{f}(\bth)$. Then the shrinking effect of the standard Kalman iterations on ensemble covariances was studied and, along with the nonlinearity of the forward model, was shown to be the reason the Kalman gain approaches zero before innovation was minimized, i.e. leading to the early stopping of the IEnKF. To resolve this issue, we added an additional step at each iteration to resample the parameter posterior ensemble with the mean and covariance kept unchanged before assigning as prior to the next iteration. The idea is to prevent early stopping by perturbing the shrinking of the ensemble covariances while still keeping the correct Kalman update direction. Consideration of high moments of the resampling distribution demonstrated that higher kurtosis adds more mutations each iteration, which may be advantageous to avoid local minima, whereas smaller kurtosis concentrates ensemble members around the mean and thus speeds convergence when the ensemble mean is a good estimator of the solution. Numerical simulations were presented to reproduce the early stopping phenomenon and demonstrate the effectiveness of the proposed resampling method.


\end{document}